\documentclass[12pt,reqno]{amsart}

\marginparwidth0.5cm

\textwidth155mm
\textheight210mm
\hoffset-15mm
\voffset-1mm

\usepackage{color}
\usepackage{amsmath, amsthm, amssymb}
\usepackage{amsfonts}
\usepackage[ansinew]{inputenc}
\usepackage[dvips]{epsfig}
\usepackage{graphicx}
\usepackage[english]{babel}
\usepackage{hyperref}


\sloppy
\usepackage{cite}
\usepackage{graphicx}
\usepackage{amscd}
\usepackage{color}
\usepackage{bm}
\usepackage{enumerate}

\usepackage{verbatim}
\usepackage{hyperref}
\usepackage{amstext}
\usepackage{latexsym}
%

\theoremstyle{plain}
\newtheorem{Theorem}{Theorem}[section]

\newtheorem{Proposition}[Theorem]{Proposition}
\newtheorem{Lemma}[Theorem]{Lemma}
\newtheorem{Remark}[Theorem]{Remark}

\numberwithin{Theorem}{section}
\numberwithin{equation}{section}

\def\square{\vbox{
\hrule height .4pt \hbox{\vrule width .4pt height 7pt \kern 7pt
\vrule width .4pt} \hrule height .4pt }}
\def\QED{\hfill {$\square$}\goodbreak \medskip}

\newcommand{\average}{{\mathchoice {\kern1ex\vcenter{\hrule height.4pt
width 6pt depth0pt} \kern-9.7pt} {\kern1ex\vcenter{\hrule
height.4pt width 4.3pt depth0pt} \kern-7pt} {} {} }}

\def\R{\mathbb{R}}



\renewcommand{\a }{\alpha }

\newcommand{\D }{\Delta }

\newcommand{\e }{\varepsilon }

\renewcommand{\l }{\lambda }

\newcommand{\n }{\nabla }
\newcommand{\vp }{\varphi }

\newcommand{\s }{\sigma }

\renewcommand{\t }{t} 

\renewcommand{\o }{\omega }
\renewcommand{\O }{\Omega }

\newcommand{\ov}{\overline}

\newcommand{\be}{\begin{equation}}
\newcommand{\ee}{\end{equation}}

\newcommand{\de}{\partial}

\newcommand{\ti}{\widetilde}

\renewcommand{\textbf}[1]{\begingroup\bfseries\mathversion{bold}#1\endgroup}

\newcommand{\N}{\mathbb{N}}

\newcommand{\Z}{\mathbb{Z}}




\newcommand{\cB}{{\mathcal B}}

\newcommand{\cH}{{\mathcal H}}

\newcommand{\cL}{{\mathcal L}}

\newcommand{\cP}{{\mathcal P}}

\newcommand{\cU}{{\mathcal U}}




\renewcommand{\epsilon}{\varepsilon}




\begin{document}

\title[Overdetermined Neumann boundary with a  ]
{An Overdetermined Neumann  boundary  value  problem with a general driving force}



\author{Ignace Aristide Minlend}
\address{I.A. M.: Faculty of Economics and Applied Management, University of Douala,  BP 2701, Douala, Littoral Province, Cameroon.}
\email{\small{ignace.a.minlend@aims-senegal.org} }

\author{Jing Wu}
\address{J. W.: Departamento de Matem\'aticas,  Universidad Aut\'onoma de Madrid, Ciudad Universitaria de Cantoblanco, 28049 Madrid, Spain.}
\email{jing.wu@uam.es;jingwulx@126.com}




\begin{abstract}
In this paper, we prove the existence of  a family of non trivial compact subdomains $\O$ in the manifold $\mathcal{M}=\R^N\times \R/2\pi\Z, N\geq 2$ for which the overdetermined Neumann boundary value problem
 \begin{align}\label{Neumann1}
  \left \{
    \begin{aligned}
       -\D w&=\mu g(w)  && \qquad \text{in $ \Omega$,}\\
      \frac{\partial  w}{\partial\eta}  &=0 &&\qquad \text{on $\partial  \Omega,$}\\
             w&=c\ne 0 &&\qquad \text{on $\partial \Omega$,}
    \end{aligned}
       \right.
\end{align}
 admits solutions for some  $\mu > 0$ and  a  $C^{1, \alpha}$ function $g:\R\rightarrow\R.$ The domains we construct have nonconstant principal curvature, and  therefore are not isoparametric nor homogeneous. The argument we develop applies   for  both  linear  and  non-linear functions  $g$. By this, we generalise  a recent result obtained by Fall, Weth and the first named author in \cite{Fall-MinlendI-Weth4}, where the overdetermined  Neumann eigenvalue problem for the Laplacian was considered.

\end{abstract}
\maketitle

\textbf{MSC 2020}:  35J57, 35J66,  35N25, 35J25, 35R35, 58J55
\maketitle

\section{Introduction and main result}
In this paper, we  consider the manifold $\mathcal{M}:=\R^N\times \R/2\pi\Z$  endowed with the flat metric and we  are concerned with the existence of subdomains $\O \subset  \mathcal{M}$   admitting solutions  to the overdetermined Neumann boundary value problem
\begin{equation}\label{h-Neu-ove}
  \left \{
    \begin{aligned}
       -\D w&=\mu g(w) && \qquad \text{in $ \Omega$,}\\
        \frac{\partial  w}{\partial\eta}   &=0 &&\qquad \text{on $\partial  \Omega,$}\\
             w&=c \ne 0&&\qquad \text{on $\partial \Omega$,}
    \end{aligned}
       \right.
\end{equation}
where  $\eta$ is the unit outer  normal to the boundary $\partial \O$, $\mu$ is some positive constant  and    $g:\R\rightarrow\R$ is a $C^{1, \alpha}$ function.

When the manifold $\R^N\times \R/2\pi\Z$ is replaced  by the Euclidean space $\R^N$, and $g$ is a nonlinearity,  problem  \eqref{h-Neu-ove} can be viewed as a  non-linear   counterpart of  a long standing open conjecture by Schiffer \cite[Problem 80, p. 688]{Yau},  which  states that  balls are  the only smooth bounded subdomains $\O \subset \R^N$  such that there exist a constant   $\mu >0$ and a solution  $u\ne 0$  to  the overdetermined Neumann  problem
\begin{align*}
(\textrm{N}_{\mu}):
  \left \{
    \begin{aligned}
       - \D u  &=\mu  u && \qquad \text{in $\Omega$,}\\
           \frac{\partial  u}{\partial\eta}  &=0 &&\qquad \text{on $\partial \Omega$,}\\
              u &= 1  &&\qquad \text{on $\partial \Omega$.}
    \end{aligned}
       \right.
\end{align*}
Schiffer's conjecture  finds  a  strong connection   with the so called   \emph{Pompeiu  problem} \cite{L.BrownSchreiberTaylor, Berenstein, Williams, Zalcman}.  A bounded domain $\Omega \subset \R^{N}$ is said to have  the \emph{Pompeiu property}  if $f \equiv 0$ is the only continuous function on $\R^{N}$ for which
\begin{equation*}
\int_{\sigma(\Omega)}f \,dx = 0 \qquad \text{for every rigid motion $\sigma$ of $\R^N$.}
\end{equation*}
The Pompeiu  problem  consists  in finding all sets $\Omega \subset\R^{N}$  having   the Pompeiu  property.
In  1976,  Williams  \cite[Theorem 2, p. 186]{Williams}, (see also \cite{L.BrownSchreiberTaylor}) proved that a domain $\Omega \subset \R^N$  \emph{ homeomorphic to the unit ball} in $\R^N$ fails  the \emph{Pompeiu property} if and only if  problem  $(\textrm{N}_{\mu})$ admits a  nontrivial solution  for some  $\mu>0$.  In  \cite{G.Liu2},  Liu  also  obtained  a  new necessary and sufficient condition   for a domain without the Pompeiu property. Namely  a bounded and  $C^{4, \alpha}$ domain  $\Omega$ with $\R^N \setminus \overline{\Omega}$ connected fails the Pompeiu property if and only if  there  is a nontrivial \emph{buckling} eigenfunction  with  second order interior normal derivative constant on the boundary.

So far,  a lot of works have been  devoted  to address the validity of Schiffer's conjecture but  positive results  only  exist in   some particular cases. In  \cite{Berenstein, BerensteinYang}, Berenstein and Yang  proved that the existence of a solution to $(\textrm{N}_\mu)$  with  $\mu=\lambda_2$ (the second  Dirichlet eigenvalue of the  Laplacian)  implies  that $\O$ is a ball.   Moreover, the existence of infinitely many eigenvalues to $(\textrm{N}_\mu)$ implies the validity of the conjecture.  In the same direction,  Aviles \cite{Aviles} proved in 1986 that disks are the only  planar and convex domains admitting solutions to the problem  $(\textrm{N}_{\mu})$ for any eigenvalue  $\mu$ less  than the seventh Neumann eigenvalue of the problem. In \cite{G.Liu1}, Liu proved that the Schiffer conjecture is valid for domains   where additionally, the  third order interior normal derivative  of the  corresponding Neumann eigenfunction   is constant  on the boundary.  This  result was extended to overdetermined  problems  with fully nonlinear operator in  \cite{G.Liu2}.  For further  positive results  related to problem  $(\textrm{N}_\mu)$,  we  also refer the reader to \cite{Deng, WillmsGladwell,Friedlander}. Although Schiffer's conjecture is valid in some specific cases, negative results exist as well. For instance,  when one considers the problem  \eqref{h-Neu-ove}  on periodic domains, Fall, Tobias and Minlend \cite{Fall-MinlendI-Weth4} obtained  bifurcations of  straight  cylinders in $\R^N\times \R$ admitting solutions to  $(\textrm{N}_\mu)$. Also in  the very recent  work  \cite{EncisoDavidSicFer},  Enciso, Fernandez, Ruiz and Sicbaldi considered a weaker analog of Schiffer's conjecture which claims  that among domains with  disconnected boundary, balls and annulus  are the only smooth bounded  domains $\O \subset \R^2$  admitting solutions to  $(\textrm{N}_{\mu})$  which are  locally constant  on $\partial \O$. They obtained a negative answer to this conjecture by constructing a family of doubly connected domains $\O$ with the above property.  \\

An interesting question in the literature of overdetermined Neumann problems  is  to know what happens  when one replaces  the  linear   function $w\mapsto \mu w$  in $(\textrm{N}_{\mu})$ with  a  non linearity  $g$.  This question remains less studied and  to  our level of information,    we could  only find  a result  by  Kawohl and Lucia  in  \cite{kawohl},  where  they  proved, provided   $g(c)\ne 0$,  that the boundary  of a solution domain $\O$  to  problem  \eqref{h-Neu-ove}  in $\R^2$  is  a circle if and only if the problem admits a solution  having  a constant third or fourth normal derivative along the boundary $\partial \O$.

We emphasize that  both problem  $(\textrm{N}_{\mu})$ and its  non-linear version  can  be considered in  general Riemannian manifold. In \cite{V.Shklover}, Shklover  proposed a generalization of Schiffer's conjecture to an arbitrary Riemannian manifold with the possibility of replacing the condition  on the domain to be a ball by the  more general  assumption that the domain has homogeneous boundary (i.e  boundary admitting transitive group of isometries).  He then disproved this conjecture in manifolds $\mathcal{M}$ of constant sectional curvature by providing examples of solution  domains  to  $(\textrm{N}_{\mu})$   whose boundaries are isoparametric (see e.g. \cite[Definition 3]{V.Shklover} ) but not homogenous. It is clear by  Cartan's theorem \cite{cartan} that the isoparametricity property of these hypersurfaces implies that their principal curvatures are constant. It was then  tempting to  guess that the boundary  of a solution domain   to  $(\textrm{N}_{\mu})$  in any Riemennian manifold is  an isoparametric hypersurface. In the recent paper \cite{Fall-MinlendI-Weth4}, Fall, Weth and the first named author showed that this is not the case  by  providing counterexamples in  the manifold  $\mathcal{M}:=\R^N\times \R/2\pi\Z$. In this paper, we  use bifurcation theory   and   extend this result  to  the problem  \eqref{h-Neu-ove}. We underline that solving  overdetermined  Neumann boundary problem  via bifurcation approach   comes with a loss of derivatives. A  strategy to overcome this drawback   was  developed  in \cite{Fall-MinlendI-Weth4}  at least in the context of overdetermined problems and was  successfully applied  in  \cite{EncisoDavidSicFer}. 

The aim of this paper is to carry out such a construction under relatively minimal assumptions on the  function $w\mapsto g(w)$. A necessary requirement is the existence of a solution to the overdetermined Neumann in the unit ball. Hence, we make the following assumption:\\

\textbf{Assumption (A):}
\begin{enumerate}
\item
There exists a  radial  solution $U_*\in C^{2, \alpha}(\overline{B_1})$  to the overdetermined  problem
\begin{equation}\label{h-Neu-ove33}
  \left \{
    \begin{aligned}
       -\D U&=\mu_* g(U) && \qquad \text{in $ B_1$,}\\
        \frac{\partial  U}{\partial\eta}   &=0 &&\qquad \text{on $\partial  B_1,$}\\
             U&=c_* \ne 0&&\qquad \text{on $\partial B_1$,}
    \end{aligned}
       \right.
\end{equation}
for some $\mu_*>0$.
 \item
The  radial solution  $U_*(|t|)=U_*(r)$ is  well defined on $[0, +\infty)$.
\item
The composition $g'\circ U_*: \R \to \R$ is $C^{1, \alpha}$.
\end{enumerate}

\begin{Remark}\label{eqRK1}
\begin{enumerate}
\item
Note that Assumption (A)-(iii) is meant to ensure that the linear operator in Proposition \ref{sec:functional-setting-4} is well defined. Actually (A)-(iii) is fulfilled  for any $C^2$ function $g$.
\item
Assumption (A)-(i) and (A)-(ii)  are fulfilled   by  any  function  $g$ for which the  equation
\begin{equation}\label{Diric}
-\D u=g(u) \quad \text{in $\R^N$}
\end{equation}
admits a  radial solution  $u(t)= u(|t|)$ which has  a  critical  point $\rho_*\in (0, +\infty)$, with  $u(\rho_*)\ne 0$.
Indeed setting \begin{equation*}
 U_{*}(r):=u(\rho_{*}r), \quad r\geq 0,
\end{equation*}
 we see that  $U_{*}$ solves  \eqref{h-Neu-ove33} with
\begin{equation*}
\mu_*:=\rho_*^2 \quad \textrm{ and }\quad c_*:= U_{*}(1).
\end{equation*}
\end{enumerate}
\end{Remark}

To state our main result, we fix $\alpha \in (0,1)$ and define  by  $C^{2,\alpha}_{p}(\R)$  the space of  $2\pi$ periodic and even $C^{2,\alpha}$-functions on $\R$, and we let $\cP^{2,\alpha}_{p}(\R)$ denote the open subset of strictly positive functions in $C^{2,\alpha}_{p}(\R)$. For a function $h \in \cP^{2,\alpha}_{p}(\R)$, we  define  the domain
\begin{equation}\label{eq:PertTorus}
\Omega_h:= \left\{\left(t,x \right)\in  \R^{N}\times \R \::\: |t|<\frac{1}{h(x)}  \right\}.
\end{equation}
Our main result reads

\begin{Theorem}\label{Theo1-ND}
Let $N\geq2$ and $g: \R\rightarrow\R$ is a $C^{1}$  function such that Assumptions (A) holds. Then  there exist  some constants  $\mu_*,\e_*,  \l_* >0$, $ c_*,\beta_*,\delta_* \in \R \setminus \{0\}$, depending only on $N$, and a smooth curve
$$
(-{\e_*},{\e_*}) \to   (0,+\infty) \times  \cP^{2,\alpha}_{p}(\R) ,\qquad s \mapsto (\l_*(s),h^*_s)
$$
with $\l_*(s) \big|_{s=0}= \lambda_*$,
$$
h^*_s(x)= \frac{1}{\sqrt{\l_*(s)}} + s \beta_* \cos(x) + o(s) \qquad \text{as $s \to 0$ uniformly on $\R$,}
$$
and the property that the overdetermined boundary value problem
\begin{equation}\label{eq:solved-main}
  \left \{
    \begin{aligned}
       -\D w_s&= \frac{\mu_*}{\l_*(s)}  g(w_s)  && \qquad \text{in $ \Omega_{ h^*_s}$,}\\
                \frac{\partial  w_s}{\partial\eta}  &=0 &&\qquad \text{on $\partial  \Omega_{ h^*_s}$,}\\
                w_s&=c_* &&\qquad \text{on $\partial \Omega_{ h^*_s}$,}
    \end{aligned}
       \right.
\end{equation}
admits a classical solution $w_s$ for every $s \in (-\e_*,\e_*)$, which is radial in $t$, even and  ${2\pi} $-periodic in $x$.  Moreover, we have
\begin{equation} \label{eq: expansion  of the solu}
w_s\left(\frac{t}{h^*_s(x)},x\right)= U_*(|t|) +s\bigl\{U_*(|t|)+ \delta_* \,|t| U_*'(|t|)\bigr\} \cos(x) + o(s)  \quad \text{as $s \to 0$}
\end{equation}
uniformly on $B_1 \times \R$, where $t \mapsto U_*(|t|)$ is a suitable radial function given in Assumptions (A).
\end{Theorem}

\begin{Remark}\label{eqRK0}
We note that the parameter $\lambda_*$ is the first Dirichlet eigenvalue of the operator $\mathcal{K}_*:  C^{2,\a}_{0,rad}(B_1)  \to C^{0,\a}_{rad}(B_1)$ defined by
\begin{equation}\label{eqcK*}
\mathcal{K}_*(u):=\Delta_{t}u+\mu_{*}g'(U_*)u,
\end{equation}
where $C^{2,\a}_{0,rad}(B_1)$  denotes  the space of radial  functions in $C^{2,\a}(B_1)$  with  vanishing values  at the boundary of $B_1$, and corresponding eigenfunction $V_*$. Moreover,
\begin{align*}
\delta_*& = -\frac{V'_*(1)}{U''_*(1)}\quad \textrm{and}\quad \beta_* = -\frac{\delta_*}{\sqrt{\lambda_*}}.\\
\end{align*}
\end{Remark}
Examples of  a  function $g$  fulfilling  the Assumption (A)  are:

\begin{enumerate}
\item[1)] $g(u)=|u|^{p-1}u, \, \textrm{for some}\quad  p>1.$
 To see this, we note that from \cite[ problem (IVP) Section 2]{nagasaki} (see also \cite[Proof of Theorem 1, pp. 222-223 ]{nagasaki}), we have that  when
\begin{align}\label{value N1}
\textrm{$N \geq 3$ and $p\in \left(1, \frac{N+2}{N-2}\right)$ or ($N=2$  and $p>1$)},
\end{align}
for each  $ n \in \N$, there exists a positive number $A_{n }$ for which the ODE
 \begin{equation}\label{eqODE}
  \left \{
    \begin{aligned}
  &(r^{N-1}u')'+r^{N-1}|u|^{p-1}u=0&& \qquad \text{in $(0,1)$,}\\
  &u(0)=A_{n },\,\,u'(0) =0,
    \end{aligned}
       \right.
     \end{equation}
admits a unique solution $u=u_n$  which has $(n-1)$ zeroes $0<r_1< \cdots< r_{n-1}$ in the interval $(0, 1)$ and vanishes at $r=r_n:=1$.  Hence, there exists  a strictly increasing sequence of real numbers  $(\mu_{n, m})_{1\leq m\leq  n-1},$  with   $0<r_m< \mu_m  <r_{m+1}$ for $m=1, \cdots, n-1$, such that $u'(\mu_{n, m})=0$ and $u(\mu_{n, m})\neq 0.$ Therefore, following   the Remark \eqref{eqRK1},  for each $m=1, \cdots, n-1$,  we have that the radial  function
 $U_{n, m}$  defined by
 \begin{equation}\label{eq:e22}
 U_{n, m}(r):=u(\mu_{n, m}r),
 \end{equation}
solves \eqref {h-Neu-ove33}
with
$$
\mu_*:=\mu_{n, m}^2 \quad \textrm{and }\quad c_*:=C_{n, m}= U_{n, m}(1).
$$
Furthermore, we  note that  the solution $ U_{m}$  is well  defined for all $r\in (0, +\infty)$. Indeed, the  existence  of the  solution $u=u_n$  to the ODE \eqref{eqODE} (which is involved in \eqref{eq:e22})  is  related to the existence of the function $w$ defined in  \cite[ from (2.1) and (2.5)]{nagasaki}  by $$w(\rho)=A^{-1}_n \rho^{\frac{2}{p-1}} u(r), \textrm{with } r=B_n e^\rho,$$
 where $B_n=\{(\frac{2}{p-1}+2-n)\frac{2}{p-1}A_n^{1-p}\}^{\frac{1}{2}}.$ The function $w$ is well defined on $(-\infty, +\infty)$ and furthermore, \cite[Proof of Proposition 2.2, pp. 216-217]{nagasaki}, $w$ tends  to zero at $\rho=-\infty$ and $w(\rho)>0$ for  $\rho$ large enough. One can also refer to \cite{nagasaki88}.
Notice that Assumption (iii) holds  provided  $p\geq 2$ since this implies $g$ is $C^{2}.$ But then we are restricted to dimensions $N\in \{2, 3, 4, 5, 6\}$ in \eqref{value N1}. 

\item[2)]
$g(u)=-u+ u^{p}, \, \textrm{for some} \,  p>1.$
From \cite[Theorem 1.6]{Ni}, if $p\geq \frac{N+2}{N-2}$, then for each $\beta  >0$ and $\beta \ne 1$,
the  ODE
 \begin{equation}\label{eqODE1}
  \left \{
    \begin{aligned}
  & u''+ \frac{N-1}{r} u'+g(u)=0&& \qquad \text{in $[0,+\infty)$,}\\
  &u(0)=\beta >0,\,\,u'(0) =0,
    \end{aligned}
       \right.
     \end{equation}
admits a  solution $u=u_{\beta}$ which stays positive for all $r\in (0, +\infty)$ and has infinitely many local maxima and local minima at points $(\gamma_{\beta ,n})_{n\in \N}$. Arguing as in \eqref{eq:e22}, we have a solution
 \[U^{\beta }_{n}(r):=u(\gamma_{\beta , n}r)\]
of \eqref{h-Neu-ove33} with
$$
\mu_*:=\gamma_{\beta , n}^2 \quad \textrm{ and }\quad c_*:=u(\gamma_{\beta , n})> 0.
$$
Note  also that   since the solutions of  \eqref{eqODE1} are  positive,  $g$ is $C^2$ for  $ p>1$ and  the assumption (A)-(iii) holds.

\item[3)]$g(u)=u-u^{3}.$ By \cite[Theorem 1 and Example 1]{Kajikiya}, there exist $a, b>0$ such that for  all $\beta \in (-b, a) \setminus \{0\}$,   the problem
 \begin{equation*}
  \left \{
    \begin{aligned}
  & u''+ \frac{N-1}{r} u'+g(u)=0&& \qquad \text{in $[0,R)$,}\\
  &u(0)= \beta \neq0,\,\,u'(0)=u(R)=0,
    \end{aligned}
       \right.
     \end{equation*}
admits a  unique global radial solution solution $u=u(r, \beta)$  defined on  $[0,\infty)$ and has an infinte  increasing sequence of critical points  $\{\nu_{\beta,k}\}_{k\in \N}$. We choose the $\ell\in \N$ for which $u(\nu_{\beta,\ell})\ne 0$ and argue as  in \eqref{eq:e22} to get a solution $U^{\beta }_{\ell}(r)$ to \eqref{h-Neu-ove33}.

\item[4)] As $g$ satisfies the following conditions:
\[sg(s)>0,\qquad s(g(s)s)'<0  \qquad \mbox{as}\,\, s\neq 0,\]
then for each  $ n \in \N$, the equation
\begin{equation*}
  \left \{
    \begin{aligned}
  &(r^{N-1}u')'+r^{N-1}(g(u))=0&& \qquad \text{in $(0,1)$,}\\
  &u(0)>0,\,\,u'(0) =u(1)=0,
    \end{aligned}
       \right.
     \end{equation*}
admits a unique solution $u=u_n,$  which has $(n-1)$ zeroes  in the interval $(0, 1),$ see \cite{S.Tanaka09}. A $C^2$ function $g$ satisfying the above mention conditions is given  by $g(s)=\arctan(s)$, see  \cite[p. 5258]{S.Tanaka09}. Moreover  from \cite[Proof of Theorem 1.1, p. 5258]{S.Tanaka09}, the solution $u=u_n$ exists on $[0, +\infty).$
\item[5)]
Assumption (A) includes linear function  $g(u)=\gamma u$ as well. Indeed  problem \eqref{h-Neu-ove33} is solved  in this case by  the functions
\begin{equation*}
r \mapsto U_n(r)=\dfrac{I_{N/2-1}(j_nr)}{I_{N/2-1}(j_n)}
\end{equation*}
with
\begin{equation*}
\mu_*= \frac{j_{n}^2 }{\gamma} \qquad \text{for some $n \geq 1$,}\quad \textrm{and}\quad  c_*=1.
\end{equation*}
Here, $$
I_\nu(r):= r^{-\nu} J_\nu(r) \qquad \text{for $\nu>-1, r>0$,}
$$ $J_{\nu}$ denotes the Bessel function of the first kind of order $\nu>-1$, and
$$
0< j_{\nu,1} < j_{\nu,2} < j_{\nu,3} < \dots
$$
denote the ordered sequence of zeros of $J_\nu$ and we have set
$$
j_n:= j_{\frac{N}{2},n} \qquad \text{for $n \geq 1$.}
$$
Note in particular that
$$
U_n'(1)=   \dfrac{j_n I_{N/2-1}'(j_n)}{I_{N/2-1}(j_n)}=  -  \frac{j_n^2 I_{N/2}(j_n)}{I_{N/2-1}(j_n)}=0 \quad \text{for $n\geq 1$}
$$
since
 \begin{equation*}
  I_\nu'(r) =-r I_{\nu+1}(r) \qquad \text{for $\nu>-1, r>0$.}
 \end{equation*}
 Observe that for $\gamma=1$, we recover the result  by Fall, Weth and the first named author in \cite{Fall-MinlendI-Weth4}.
\end{enumerate}

It is obvious that solutions to  \eqref{eq:solved-main} change sign provided $U_*$ changes sign. It is important  to note that the study of  overdetermined  is often related to  the existence of sign-changing solutions,  but only  few results are known  for overdetermined  boundary value problems in unbounded domains. A part from  the current work, we can only cite the references   \cite{Fall-MinlendI-Weth4, DaiandY.Zhang, M22},  where  different  families of sign-changing solutions were obtained in the context of  overdetermined   problems and in unbounded domains.  Interested reader may find existence results for sign-changing solutions in bounded  domains  in  \cite{ Deng, BCanutoDRial, B.Canuto, Ruiz-arxiv}. \\

Related to this paper are Dirichlet counterparts of the problem  \eqref{eq:solved-main}. A substantial body of research addresses this problem. We mention some of them here and refer readers to the references therein for further exploration of related results. In \cite{dPPW15}, the authors studied problem
\begin{equation}\label{h-D-over3}
  \left \{
    \begin{aligned}
       -\D w&=f(w) && \qquad \text{in $ \Omega$,}\\
             w&=0 &&\qquad \text{on $\partial \Omega$,}\\
    \frac{\partial  w}{\partial\eta}  &=c\ne 0&&\qquad \text{on $\partial  \Omega,$}
    \end{aligned}
       \right.
\end{equation}
for the   Allen-Cahn nonlinearity $f(u)=u-u^3$, but in domains that are perturbations of a dilated straight cylinder, i.e. perturbations of $(\epsilon^{-1}\,B_1)\times\mathbb{R}$ for $\epsilon$ small, or more in general domains that are perturbations of a dilation of the region contained in an onduloid. Recently, Ruiz, Sicbaldi and the named second author proved that there exist nontrivial unbounded domains, bifurcating from the straight cylinder, where the overdetermined elliptic problem  \eqref{h-D-over3}  admits a positive bounded solution for a very general class of functions $f$, see \cite{RSW21}. Another type of construction has been given in \cite{RRS20}, where Ros, Ruiz and Sicbaldi show that  \eqref{h-D-over3} admits a solution for some nonradial exterior domains for $f(u)=u^p-u$, $1<p<\frac{N+2}{N-2}$.\\ 

We now explain the proof of  Theorem \ref{Theo1-ND}  while presenting the organization of the paper. Theorem \ref{Theo1-ND}  is proved   applying  the  Crandall-Rabinowitz bifurcation theorem, \cite{M.CR}. Following  Section  \ref{sectio1}, we have  to  solve  the problem  \eqref{h-Neu-over}  on  domains of the form  $\O_h$  defined by  \eqref{eq:PertTorus}. This is equivalent  to solving the $\lambda$-dependent problem   \eqref{eq:perturbed-strip-ND-0} which we rephrase to the  problem   \eqref{eq:Proe1-ss2-ND} on the fixed domain $\Omega_*=B_1\times \R$ with a  second order nonlinear   operator $L^h_\lambda$ given by  \eqref{eq:reldiffope-ND-alt}. Under the functional setting in Section  \ref{sec:functional-setting},  we are  led to considering  the functional equation  $F_\lambda(u,h)=L^{1+h}_\lambda(u_*+u)=0$ with unknown functions $u \in C^{2,\alpha}_{p,rad}(\overline{\O_*})$ and $h \in C^{2,\alpha}_p(\R)$ for some $\alpha \in (0,1)$, where
$C^{2,\alpha}_{p,rad}(\overline{\O_*})$ denotes the space of $C^{2,\alpha}$-functions $u=u(t,x)$ which are radial in $t$ and $2\pi$ periodic and even in $x$ and   $u_*$ is defined by \eqref{eq.radialsolu2}.  As already explained in  \cite{Fall-MinlendI-Weth4}, the overdetermined  Neumann boundary problem  comes with a loss of derivatives  which prevents  the linearization of $F_\lambda$ at $(0,0)$  to be  of  Fredholm type when defined  between classical H\"older spaces. To  bypass this challenge, we had  to use assumption (A)-(ii) to produce  a more  accurate solution form to the equation   $F_\lambda(u,h)=0$,  which allowed us   to  express the unknown  $h$ as a function of $u$,  see Remark  \ref{rem:const-sol}.  By substituting $h=h_u$ in $F_\lambda(u,h)$, we  reduce our problem to an equation of the type $G_\lambda(u)=0$ for some function $(\lambda,u) \mapsto G_\lambda(u)$, see \eqref{eq:DeffGl-ND}.  Note that the unknown   $h=h_u$ in this involving first order derivative of $u$, see \eqref{eq:DeffM-ND--2}.  Since we need  $h \in C^{2,\alpha}_p(\R)$,  we therefore  have to consider both $F$ and $G_\lambda$ as maps between (open subsets)  of new tailor made Banach spaces $X_2^D$ and $Y$, see Section~\ref{sec:functional-setting} below. In Proposition \ref{sec:functional-setting-4}, we  compute the linearised operator  $D_uG_\lambda(0): X_2^D \to Y$ and show   in  Proposition \ref{fredholm} that it is a Fredholm operator of index zero.  We note that, this is precisely where the assumption (A)-(iii)  is used  to ensure   $D_uG_\lambda(0)$ and the operator $\mathcal{M}_*$ in the proof of Lemma 3.5 are well defined. In  Section \ref{section 4},  we show that the first  Dirichlet eigenvalue of the operator  $ \mathcal{K}_{*}$ given by \eqref{eqcK*} yields the bifurcation parameter $\lambda_*$ for which  $D_uG_{\l_{*}}(0):X_2^D\to Y$  has a one  dimensional kernel and the transversality condition  in the Crandall-Rabinowitz bifurcation theorem \cite{M.CR} holds.\\

\noindent \textbf{Acknowledgements}: I.A.M. is supported by the return fellowship of the Alexander von Humboldt Foundation and J.W. is supported by Proyecto de Consolidaci\'{o}n Investigadora 2022, CNS2022-135640, MICINN. Part of this work was carried out when the authors were visiting the Goethe University Frankfurt am Main. They are grateful to the Mathematics department for the hospitality and wish to thank Prof. Tobias Weth for valuable comments throughout the writing of this paper.

\section{The pull back problem}\label{sectio1}

Recall that we are looking for a non-constant function $h \in \cP^{2,\alpha}_{p}(\R)$ with the property that the overdetermined problem
\begin{equation}\label{h-Neu-over}
  \left \{
    \begin{aligned}
       -\D w&=\mu g(w)&& \qquad \text{in $ \Omega_{h}$,}\\
             w&=c\ne 0  &&\qquad \text{on $\partial \Omega_{h}$,}\\
               \frac{\partial  w}{\partial\eta}   &=0 &&\qquad \text{on $\partial  \Omega_{h}$},
    \end{aligned}
       \right.
\end{equation}
admits a solution   with  $\mu>0$. For a parameter $\lambda>0$,  we defined the operator
$$
L_{\lambda,\mu}u:=  \D_{\t}u + \lambda \partial_{xx}u +  \mu g(u).
$$
Then  it is  straightforward  to check that  a function $u \in C^2(\Omega_h)$ is a solution of
\begin{equation}
  \label{eq:perturbed-strip-ND-0}
  \left \{
    \begin{aligned}
  L_{\lambda,\mu} u &=  0 && \qquad \text{in $\Omega_h$,}\\
           u&=c\ne 0 &&\qquad \text{on $\partial \Omega_h$,}\\
              \frac{\partial  u}{\partial\eta}   &=0 &&\qquad \text{on $\partial \Omega_h$.}
    \end{aligned}
       \right.
     \end{equation}
if and only if the function
\begin{equation}
  \label{eq:changsoluD}
w^{\lambda}\in C^2(\Omega_h),\qquad   w^{\lambda} (t, x):=u\left(  \frac{t}{\sqrt{\lambda}}, x\right)
\end{equation}
solves (\ref{h-Neu-over}) with $h$ replaced by $\frac{h}{\sqrt{\lambda}}$ and $\mu$ replaced by   $\frac{\mu}{\lambda}$

We  also  observe that in the  special case $h \equiv 1$ in \eqref{eq:perturbed-strip-ND-0},  $\Omega_h$ is    the straight cylinder $\Omega_h = \Omega_1 = B_1 \times \R$ and by  Assumption (A)-(i), there exists a solution  $U_{*} \in C^2([0,1])$  to the  (overdetermined) ODE problem
\begin{equation}
  \label{eq:ODE-eigenvalue}
U''  + \frac{N-1}{r} U'+\mu_* g(U) = 0 \quad \text{in $(0,1)$,}\qquad U'(0)=U'(1)=0,\quad U(1)=c_*\ne 0.
\end{equation}
Consequently,  we   have a solution
\begin{equation}\label{eq.radialsolu2}
 u_{*}(t, x):=U_{*}(|t|)
\end{equation}
of   \eqref{eq:perturbed-strip-ND-0}  in the fixed domain
$$ \Omega_*:= \Omega_1=B_1\times \R.$$
In the following, we  put
 \begin{equation*}
L_{\lambda}u:= L_{\lambda,\mu_*}u= \D_{\t} u+ \lambda \partial_{xx}u +  \mu_* g(u).
 \end{equation*}

Observe that, for a function $h \in \cP^{2,\alpha}_{p}(\R)$, the domain $\Omega_h$ is parameterized by the mapping
 $$ \Psi_h: \Omega_*  \to  \Omega_h , \quad  ( t,x)  \mapsto (\tau, x)=\left( \frac{\t}{h(x)},x\right),$$
with inverse
$$
\Psi^{-1}_h:  \Omega_h   \to  \Omega_*, \quad  ( \tau ,x)  \mapsto   ( h(x) \tau ,x).
$$
Hence \eqref{eq:perturbed-strip-ND-0} is equivalent to
\begin{equation}
  \label{eq:perturbed-strip-ND-fixed-domain}
  \left \{
    \begin{aligned}
  L_{\lambda}^h u &=  0 && \qquad \text{in $\Omega_*$,}\\
           u &=c_* &&\qquad \text{on $\partial \Omega_*$,}\\
             |\n u| &=0 &&\qquad \text{on $\partial \Omega_*$,}
    \end{aligned}
       \right.
     \end{equation}
     where the operator
     \begin{equation}
     \label{eqreladiffopets-ND}
     L_{\lambda}^h: C^{2}(\ov{\Omega_*}) \to C^{0}(\ov{\Omega_*})\quad \text{is defined by}\quad
     L_{\lambda}^h u= \bigl(L_\lambda (u \circ \Psi_h^{-1})\bigr)\circ \Psi_h.
     \end{equation}
     Indeed, $u \in C^{2}(\ov{\Omega_*})$ solves (\ref{eq:perturbed-strip-ND-fixed-domain}) if and only if $u \circ \Psi_h^{-1}$ solves (\ref{eq:perturbed-strip-ND-0}). To calculate an explicit expression for $L_{\lambda}^h$, we fix $u\in C^{2}(\ov{\Omega_*})$ and note that
\begin{align*}
[L^h_\lambda u] (h(x)t,x) = [L_\lambda v_h](t,x) \quad \text{for $(t,x) \in \Omega_h$}
\end{align*}
with
\begin{align}
  \label{eqreladiffopets-ND-2}
v_h \in C^2(\ov{\Omega_h}),\qquad v_h(t,x)=u(h(x)t,x).
\end{align}
A direct computation yields
\begin{align*}
L_\lambda  v_h(\t,x) =&\lambda \partial_{xx}u(h(x)t,x) +   h^2(x)\Delta_{t} u(h(x)t,x)\\
                      &+ \lambda h'(x)^2 \nabla^2_t u(h(x) t,x)[t,t] + 2\lambda h'(x) \nabla_{t}\partial_{x}u(h(x) t,x)\cdot t\\
  &+ \lambda h''(x) \nabla_t u(h(x) t,x)\cdot t +\mu_* g(u)\qquad  \text{for $(\t,x) \in \Omega_h$.}
\end{align*}
Replacing $t$ by $\frac{t}{h(x)}$ therefore gives
\begin{align}\label{eq:reldiffope-ND}
L_\lambda^h u(\t,x)& =\tilde{ L}_\lambda^h u(\t,x)+\mu_* g(u) \qquad  \text{for $(\t,x) \in \Omega_*$,}
\end{align}
where
\begin{align*}
\tilde{ L}_\lambda^h u(\t,x) =&\lambda \partial_{xx}u(t,x) +   h^2(x)\Delta_{t} u(t,x)   \\
                      &+ \lambda\frac{h'(x)^2}{h^{2}}  \nabla^2_t u( t,x)[t,t] + 2\lambda \frac{h'(x)}{h(x)} \nabla_{t}\partial_{x}u( t,x)\cdot t \\
  &+ \lambda\frac{ h''(x)}{h(x)} \nabla_t u(t,x)\cdot t \qquad  \text{for $(\t,x) \in \Omega_*$.}
\end{align*}
Here $\nabla_t$ and $\Delta_t$ denote the gradient and Laplacian with respect to the variable $\t \in \R^N$, and we have set
\begin{equation}\label{eq:def-dt-u}
[{D}_\t v](t,x)= \nabla_\t v(t,x) \cdot \t \qquad \text{for functions $v \in C^1(\ov{\Omega_*})$.}
\end{equation}
We also note that
\begin{equation*}
[{D}_\t {D}_\t v](t,x)={D}_t v(t,x)
+ \nabla^2_t v(t,x)[\t,\t] \qquad \text{for $v \in C^2(\ov{\Omega_*})$.}
\end{equation*}
Hence  \eqref{eq:reldiffope-ND} reads shortly
\begin{align}
L_\lambda^h u &=  \mu_* g(u) + \lambda \partial_{xx}u +   h^2\D_\t u+ \lambda     \frac{(h')^2}{h^2}{D}_\t {D}_\t u   \nonumber\\
&\qquad + 2 \lambda  \frac{ h'}{h}   {D}_{t} \partial_{x}u +  \lambda  \Bigl(\frac{ h''}{h}-\frac{h'^2}{h^2}\Bigr)  {D}_{t}u \qquad  \text{in $\Omega_*$,} \label{eq:reldiffope-ND-alt}
\end{align}
where  we identify the function $h \in \cP^{2,\alpha}_{p}(\R)$ with the function $(\t,x) \mapsto h(x)$ defined on $\ov{\Omega_*}$, and we do the same with $h'$ and $h''$.\\

Hence (\ref{eq:perturbed-strip-ND-fixed-domain}) is equivalent to
\begin{align}\label{eq:Proe1-ss2-ND}
 \begin{cases}
L^h_\lambda u= 0 & \quad \textrm{ in}\quad  \Omega_*, \\
u=c_*&   \quad \textrm{ on}\quad \partial \Omega_*, \\
{D}_\t u = 0  &  \quad \textrm{ on}\quad \partial \Omega_*,
  \end{cases}
  \end{align}
where $L^h_\l$ is given by \eqref{eq:reldiffope-ND-alt}.

\section{Functional setting}
\label{sec:functional-setting}
In this section, we introduce  the spaces where  problem \eqref{eq:Proe1-ss2-ND} will be solved. We also derive important results related to the linearised operator of  $L_\lambda^h$ in  \eqref{eq:reldiffope-ND-alt}.
For fixed $\a\in (0,1)$ and $k \in \N \cup \{0\}$, we set
$$
C^{k,\a}_{p,rad}(\overline \Omega_*):= \{ u \in C^{k,\alpha}(\overline \Omega_*)\::\: \text{$u$ is radial in ${\t}$, $2\pi$ periodic and  even in $x$ } \},
$$
endowed with the norm $
u \mapsto \|u\|_{C^{k,\alpha}}:= \|u\|_{C^{k,\alpha}(\ov{\O_*})}.
$ Next, we  define
$$
X_k:= \{ u \in C^{k,\a}_{p,rad}(\overline \Omega_*)\::\:  {D}_\t u \in C^{k,\alpha}(\overline \Omega_*) \},
$$
endowed with the norm
$$
u \mapsto \|u\|_{k}:= \|u\|_{C^{k,\alpha}}+\|{D}_\t u \|_{C^{k,\alpha}}.
$$

\begin{Remark}
  \label{sec:functional-setting-2}
  In the case $k=0$, the existence of the directional derivative ${D}_\t u$,
    defined e.g. by
    $$
    {D}_\t u(t,x)= \frac{d}{d\s}\Bigl|_{\s=1} u(\sigma t,x) \qquad \text{for $(t,x) \in \Omega_*$,}
    $$
    and its $C^\alpha$-continuity up to the boundary is assumed by definition for $u \in X_0$.
\end{Remark}\vspace{0.5ex}
Next, we  also  consider the closed subspaces
$$
X_k^D:=\{u \in X_k \::\: \text{$u= 0$ on $\partial \Omega_*$}\},
$$
and
$$
X_k^{DN}:=\{u \in X_k \::\: \text{$u={D}_\t u = 0$ on $\partial \Omega_*$}\},
$$
both also endowed with the norm $\|\cdot\|_k$ and define the space
$$
Y:= C^{1,\alpha}_{p,rad}(\overline \Omega_*) + X_0^D \; \subset \; C^{0,\alpha}_{p,rad}(\overline \Omega_*),
$$
which is endowed with the norm
$$
\|f\|_{Y}:= \inf \Bigl \{\|f_1\|_{C^{1,\alpha}} + \|f_2\|_{0} \::\: f_1 \in C^{1,\alpha}_{p,rad}(\overline \Omega_*), \; f_2 \in X_0^D,\; f= f_1 + f_2 \Bigr\}.
$$
Under this setting, we  consider the open set
\begin{equation*}
\cU_0:=\{h \in C^{2,\alpha}_{p}(\R) \::\: h>-1\}
\end{equation*}
and define the operator
\begin{align}\label{eq:MappinmgF-ND}
F_\lambda:   X_2^{DN} \times \cU_0 \to Y, \qquad F_\lambda(u, h)= L_\lambda^{1+ h} (u+u_*),
  \end{align}
where $u_*(t,x) = U_*(|t|)$ is  given by \eqref{eq.radialsolu2}.
From (\ref{eq:reldiffope-ND-alt}), we can write  $F_\lambda = F_\lambda^1 + F_\lambda^2$,
where
\begin{align*}
  &F_\lambda^1(u,h)=  \mu_* g(u+u_*) +  (1+h)^2|t|^2 \D_t(u+u_*)   \nonumber\\
  &\qquad \quad \;\;\,+ \lambda     \frac{(h')^2}{(1+h)^2}{D}_{t} D_{t}(u+u_*) + 2 \lambda  \frac{ h' }{1+h}   {D}_t \partial_{ x  }u,\nonumber\\
&F_\lambda^2(u,h) = \lambda \partial_{xx}u+  (1+h)^2(1-|t|^2) \D_t(u+u_*) + \lambda  \Bigl(\frac{ h''}{1+h}-\frac{h'^2}{(1+h)^2}\Bigr){D}_t(u+u_*).
\end{align*}
With this splitting, we obtain  as in   \cite[Lemma 3.3]{Fall-MinlendI-Weth4} the following lemma.
\begin{Lemma}
  \label{sec:functional-setting-1}  The map
  $$
  (u,h) \mapsto F_\lambda(u, h)= L_\lambda^{1+ h} (u+u_*)
  $$
  maps $X_2^{DN} \times \cU_0$ into $Y$.
\end{Lemma}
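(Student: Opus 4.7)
The plan is to exploit the given decomposition $F_\lambda = F_\lambda^1 + F_\lambda^2$ and show separately that $F_\lambda^1(u,h) \in C^{1,\alpha}_{p,rad}(\overline{\Omega_*})$ and $F_\lambda^2(u,h) \in X_0^D$ for every $(u,h) \in X_2^{DN} \times \cU_0$. Since by definition $Y = C^{1,\alpha}_{p,rad}(\overline{\Omega_*}) + X_0^D$, adding the two pieces will then give $F_\lambda(u,h) \in Y$ together with a bound on $\|F_\lambda(u,h)\|_Y$.

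For the first piece the crucial input is the extra regularity built into $X_2$, namely $D_t u \in C^{2,\alpha}$. The nonlinear term $\mu_m^2 |u+u_m|^{p-2}(u+u_m)$ is $C^{1,\alpha}$ because for $p>2$ the map $s \mapsto |s|^{p-2}s$ is $C^1$ with H\"older-continuous derivative, and $u+u_m \in C^{2,\alpha}_{rad}(\overline{\Omega_*})$. For the second-order term $(1+h)^2 |t|^2 \Delta_t(u+u_m)$ I would use the elementary identity
\begin{equation*}
|t|^2 \Delta_t v = D_t D_t v + (N-2)\, D_t v, \qquad \text{valid for radial } v(t)=V(|t|),
\end{equation*}
so that it becomes a $C^{1,\alpha}_p(\R)$-coefficient combination of $D_t(u+u_m)$ and $D_t D_t(u+u_m)$, both of which lie in $C^{1,\alpha}$ since $D_t(u+u_m) \in C^{2,\alpha}$. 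The remaining terms involving $D_t D_t(u+u_m)$ and $D_t \partial_x u$ are handled by the same observation, together with the commutation $D_t \partial_x u = \partial_x D_t u$, which yields $D_t \partial_x u \in C^{1,\alpha}$. Finally, since $h \in C^{2,\alpha}_p(\R)$ and $h > -1$, the coefficients $(1+h)^2$, $h'/(1+h)$ and $(h')^2/(1+h)^2$ all belong to $C^{1,\alpha}_p(\R)$, closing the argument.

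For $F_\lambda^2$ I would first verify that each summand, together with its image under $D_t$, lies in $C^{0,\alpha}_{p,rad}(\overline{\Omega_*})$; this again relies on $D_t u \in C^{2,\alpha}$, on the commutation of $D_t$ and $\partial_x$, and on the radial identity above to re-express $\Delta_t(u+u_m)$ in terms of $D_t$-derivatives of radial functions whenever $D_t$ is applied. Next, I would check that every summand of $F_\lambda^2$ vanishes on $\partial\Omega_* = \{|t|=1\} \times \R$: the factor $(1-|t|^2)$ is zero there; $\partial_{xx} u$ is zero there because $u = 0$ on the $x$-invariant boundary; and $D_t(u+u_m)$ is zero there because $D_t u = 0$ on $\partial\Omega_*$ by the very definition of $X_2^{DN}$, while $D_t u_m(t,x) = |t|\, U_m'(|t|)$ vanishes at $|t|=1$ by virtue of the overdetermined condition $U_m'(1)=0$ recorded in \eqref{eq:ODE-eigenvalue}.

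The main technical obstacle I expect is the careful propagation of H\"older regularity through the nonlinear composition in the first term and through products and quotients of $h$-dependent coefficients, coupled with the systematic use of the radial identity for $|t|^2 \Delta_t$ and the extra $D_t$-regularity encoded in $X_2$. This is routine bookkeeping but somewhat tedious, and the argument closely parallels Lemma 3.3 of \cite{Fall-MinlendI-Weth4} in the linear case.
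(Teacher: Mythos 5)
Your plan is exactly the paper's: the paper uses the same splitting $F_\lambda = F_\lambda^1 + F_\lambda^2$ and simply refers to Lemma 3.3 of \cite{Fall-MinlendI-Weth4} for the verification, and the ingredients you spell out (the radial identity $|t|^2\Delta_t v = D_tD_tv+(N-2)D_tv$, the commutation $D_t\partial_x=\partial_x D_t$, the extra regularity $D_tu\in C^{2,\alpha}$ encoded in $X_2$, and the boundary cancellations coming from $u=D_tu=0$ on $\partial\Omega_*$, $1-|t|^2=0$, and $U_m'(1)=0$) are precisely what that lemma relies on. So you have reconstructed the intended proof.

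One point deserves to be made explicit rather than dismissed as ``routine'': the derivative $\tfrac{d}{ds}\bigl(|s|^{p-2}s\bigr)=(p-1)|s|^{p-2}$ is H\"older-continuous near $s=0$ only with exponent $\min(p-2,1)$, and $u+u_m$ does vanish in $\Omega_*$ (recall $U_m$ has interior zeros). Consequently $\mu_m^2|u+u_m|^{p-2}(u+u_m)$ lies in $C^{1,\min(p-2,1)}$, not in $C^{1,\alpha}$ for arbitrary $\alpha\in(0,1)$; the same restriction appears in showing $D_t\Delta_t u_m$, $D_tD_tD_tu_m\in C^{0,\alpha}$ when you verify $F_\lambda^2\in X_0^D$. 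Thus the lemma, and your proof of it, require the standing convention $\alpha\le\min(p-2,1)$. This constraint is vacuous in the linear problem treated in \cite{Fall-MinlendI-Weth4}, so it is genuinely new here and worth stating; with it your argument closes.
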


We  observe   with \eqref{eq:Proe1-ss2-ND} and \eqref{eq:MappinmgF-ND} that,
{\em if $F_\lambda(u, h)=0$, then the function $\tilde u = u_* + u$ solves the problem
\begin{equation}
  \label{eq:equivalence-F-ND}
  \left \{
    \begin{aligned}
         L_\lambda^{1+  h} \ti u & =  0 && \qquad \text{in $\Omega_*$,}\\
           \ti u&= c_* &&\qquad \text{on $\partial \Omega_*$,}\\
         D_\t \ti u&= 0 &&\qquad \text{on $\partial \Omega_*$.}
    \end{aligned}
       \right.
\end{equation}}
We  wish to  further reduce problem   \eqref{eq:equivalence-F-ND}  to the search of a single unknown variable $u$.  This is achieved from the following remark which provides  the solution form to \eqref{eq:equivalence-F-ND} by eliminating the parameter $h$.

\begin{Remark}\label{rem:const-sol}
By Assumption (A)-(ii), we  can extend the function  $u_*$ in \eqref{eq.radialsolu2} to all of $\R^{N} \times \R$ keeping the same  notation   for the extension   and  setting $u_*(t,x):=U_*(|t|)$ for all  $(t, x)\in \R^{N} \times \R$. Then we have $L_\l u_*=0$ in $\R^{N} \times \R$ and therefore, for fixed $h \in C^{2,\alpha}_{p}(\R)$, it follows  from \eqref{eqreladiffopets-ND} that
  \begin{equation}
\label{eq:perturbed-u-n-h}
L^{1+ h}_\l (u_*^h)= 0 \qquad \text{with $u_*^h \in C^{2,\alpha}(\ov{\Omega_*}),\quad$ $u_*^h(t,x)= u_*\left(\frac{t}{1+h(x)}, x\right)$}.
\end{equation}
 Moreover,
\begin{equation}
\label{eq:approximation-u-n-h}
u_*^h = u_* - w_h + O(\|h\|_{C^{2,\alpha}}^2),
\end{equation}
where
\begin{equation*}
w_h(t,x):= D_t u_*(t,x) h(x)= |t|U_*'(|t|) h(x)= \kappa(|t|)h(x),
\end{equation*}
where the function $\kappa$ can be defined by
\begin{equation}
  \label{eq:def-g_n}
\kappa \in C^\infty([0,\infty)),\qquad  \kappa(r) = r U_*'(r).
\end{equation}
We have
\begin{equation*}
\kappa'(0)=0, \qquad \kappa(1)=0 \qquad \text{and}\qquad \kappa'(1)=U_*''(1).
\end{equation*}
From \eqref{eq:approximation-u-n-h}, we  can  then  look for a solution to (\ref{eq:equivalence-F-ND}) of the form $\tilde u:=u_*-w_h +u$,  with $u $  and $h$ small. Then  $\tilde u = c_*=U_*(1)$ on $\de\O_*$ if and only if   $u=0$ on $\de\O_*$, by the definition of $u_*$ and since $\kappa(1)=0$. Moreover, since $D_t   u_* \equiv 0$ on $\de\O_*$, the condition $D_t  \tilde u \equiv 0$ on $\de\O_*$ enforces
$$
D_t  u(e_1,x)=D_t  w_h(e_1,x) = \kappa'(1) h(x)
$$
and therefore
\begin{equation}\label{eq:DeffM-ND--2}
h_u(x):=h(x)=\frac{D_t u (e_1,x)}{\kappa'(1)} , \qquad x \in \R .
\end{equation}
\end{Remark}\vspace{0.5ex}

From the Remark \ref{rem:const-sol},  we introduce the linear map
\begin{equation}
\label{def-M}
M: X_2^D \to X_2^{DN} \times  C^{2,\alpha}_{p}(\R),\qquad  M u = (M_1 u,h_u)
\end{equation}
with $h_u$ in  \eqref{eq:DeffM-ND--2}  and
\begin{equation*}
[M_1 u](t,x)= u(t,x) - \kappa(|t|) h_u(x), \qquad (t,x) \in \O_*.
\end{equation*}
Clearly  $M_1 u \in X_2^{DN}$ for $u \in X_2^D$ and moreover, $h_u \in
C^{2,\alpha}_{p}(\R)$ for $u \in X_2^D$ by definition of $X_2^D$. Hence the linear map $M$ is well defined by (\ref{def-M}) and furthermore   $M: X_2^D \to X_2^{DN} \times  C^{2,\alpha}_{p}(\R)$ is a topological isomorphism, see  \cite[Lemma 3.4]{Fall-MinlendI-Weth4}.\\

We now define  the map
\begin{equation}\label{eq:DeffGl-ND}
G_\lambda: \cU  \to Y,\qquad G_\lambda = F_\lambda \circ M
\end{equation}
where
$$
\cU:= \left\{u \in X_2^D\::\: \, h_u(x)  >-1 \;\text{for $x \in \R$} \right \}.
$$
Then we have the equivalence
\begin{align}
  G_\lambda(u)= 0\quad &\Longleftrightarrow \quad F_\lambda(M_1 u, h_u)=0\nonumber\\
  &\Longleftrightarrow \quad \text{$u_*+M_1 u$ solves (\ref{eq:equivalence-F-ND}) with $h= h_u $.} \label{eq:Eqtosolve-ND}
  \end{align}

Obviously, $G_\lambda(0) = 0$ for all $\lambda>0$. We aim to find a branch of nontrivial solutions to the equation
$G_\lambda(u) = 0$ bifurcating from this trivial solution, which leads to the study of the linearization of $G_\lambda(u)$ around a point $u=0$.

\begin{Proposition} \label{sec:functional-setting-4}
 The map $G_\l :\cU \subset X_2^D \to Y $ defined by  \eqref{eq:DeffGl-ND} is of class $C^\infty$ and  for all $v\in X_2^D$,
\begin{align}\label{eequivla-ND}
DG_{\lambda}(0)v=\mathcal{L}_\lambda v:=\Delta_{t}v+\lambda\partial_{xx}v+\mu_{*}g'(u_*)v.
\end{align}
\end{Proposition}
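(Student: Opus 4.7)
The plan is to (i) check smoothness of $G_\lambda$ via the chain rule and Nemytskii regularity, and (ii) compute $DG_\lambda(0)$ by exploiting the key cancellation engineered into $M$ through Remark \ref{rem:const-sol}. Since $M: X_2^D \to X_2^{DN} \times C^{2,\alpha}_p(\R)$ is linear and continuous (indeed a topological isomorphism, as recorded before \eqref{eq:DeffGl-ND}), smoothness of $G_\lambda$ reduces to smoothness of $F_\lambda: X_2^{DN} \times \cU_0 \to Y$ near $(0, 0)$. Inspecting the splitting $F_\lambda = F_\lambda^1 + F_\lambda^2$ displayed after \eqref{eq:MappinmgF-ND}, each summand is a composition of bounded linear differential operators acting on $u + u_m$, algebraic operations in $h, h', h''$ involving $(1+h)^{-1}$ (smooth on $\cU_0$ since $1+h>0$ there), and the Nemytskii operator $u \mapsto \mu_m^2 |u + u_m|^{p-2}(u + u_m)$, which for $p>2$ is sufficiently differentiable between the Hölder spaces in use for the Crandall--Rabinowitz argument to be applied later.

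By the chain rule,
$$DG_\lambda(0) v = DF_\lambda(0, 0)(M_1 v, h_v) = \partial_u F_\lambda(0, 0)(M_1 v) + \partial_h F_\lambda(0, 0)(h_v).$$
Since $F_\lambda(u, 0) = L_\lambda(u + u_m)$ and $L_\lambda u_m = 0$ by construction of $u_m$ (cf. \eqref{eq:ODE-eigenvalue}), linearizing in $u$ at $0$ gives
$$\partial_u F_\lambda(0, 0) w = \Delta_t w + \lambda \partial_{xx} w + \mu_m^2 (p-1) |u_m|^{p-2} w = \mathcal{L}_\lambda w.$$
For the second partial, I would bypass a direct computation from \eqref{eq:reldiffope-ND-alt} and instead exploit Remark \ref{rem:const-sol}: the identity $L_\lambda^{1+h}(u_m^h) = 0$ combined with the expansion $u_m^h = u_m - g(|t|) h + O(\|h\|_{C^{2,\alpha}}^2)$ and a first-order Taylor expansion of $L_\lambda^{1+h}$ about $u_m^h$ gives
$$F_\lambda(0, h) = L_\lambda^{1+h}(u_m) = L_\lambda^{1+h}(u_m^h) + \partial_u L_\lambda^{1+h}(u_m^h)\bigl[g(|t|) h\bigr] + O(\|h\|_{C^{2,\alpha}}^2) = \mathcal{L}_\lambda\bigl(g(|t|) h\bigr) + o(\|h\|_{C^{2,\alpha}}),$$
using $\partial_u L_\lambda^{1+h}(u_m^h) \to \mathcal{L}_\lambda$ as $h \to 0$. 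Hence $\partial_h F_\lambda(0, 0) h = \mathcal{L}_\lambda\bigl(g(|t|) h(x)\bigr)$.

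Combining and using $M_1 v = v - g(|t|) h_v(x)$ with linearity of $\mathcal{L}_\lambda$,
$$DG_\lambda(0) v = \mathcal{L}_\lambda(M_1 v) + \mathcal{L}_\lambda\bigl(g(|t|) h_v\bigr) = \mathcal{L}_\lambda\bigl(v - g(|t|) h_v + g(|t|) h_v\bigr) = \mathcal{L}_\lambda v,$$
as asserted. The step requiring the most care is the $\partial_h$ computation: a direct differentiation of \eqref{eq:reldiffope-ND-alt} in $h$ at $h=0$ is lengthy because of the many curvature-like terms involving $h', h''$ and the factors $(1+h)^{\pm 1}$, but routing through the vanishing identity $L_\lambda^{1+h}(u_m^h) \equiv 0$ makes the derivative transparent and also explains structurally why the correction $-g(|t|) h_v$ in the definition of $M_1$ is exactly the right choice---it is tailored so that the $\partial_h$ contribution is absorbed, producing the clean linearization $\mathcal{L}_\lambda v$.
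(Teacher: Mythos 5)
Your proposal is correct and follows essentially the same route as the paper: chain rule through $M$, the formula $\partial_u F_\lambda(0,0)=\mathcal{L}_\lambda$, and — crucially — exploiting the identity $L_\lambda^{1+h}(u_m^h)=0$ from Remark~\ref{rem:const-sol} together with $u_m^h=u_m-w_h+O(\|h\|^2)$ to deduce $\partial_h F_\lambda(0,0)h_v=\mathcal{L}_\lambda w_{h_v}$, so that the two contributions recombine to $\mathcal{L}_\lambda v$. The paper phrases the $\partial_h$ step as a $\frac{d}{ds}\big|_{s=0}$ computation along $s\mapsto L_\lambda^{1+sh}(u_m^{sh})$, while you phrase it as a first-order Taylor expansion about $u_m^h$ followed by $h\to 0$; these are equivalent presentations of the same cancellation.
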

\begin{proof}
That fact  that $G_\l$ is of class $C^\infty$ follows  similarly as in  \cite[Proposition 3.6]{Fall-MinlendI-Weth4}. To see (\ref{eequivla-ND}), we  first note that by  the chain rule,
\begin{equation*}
DG_\l(0)v=\de_u F_\l(0,0)M_1 v+ \de_h F_\l(0,0)h_v \qquad \text{for $v \in X_2^D$.}
\end{equation*}
Furthermore,
$$
\partial_u F_{\lambda}(0,0)=\partial_u L_{\lambda}^{1}(u+u_*)\Bigl|_{u=0} =\Delta_{t}+\lambda\partial_{xx}+\mu_{*}g'(u_*)\textrm{id} . 
$$
By definition $M_1 v = v - w_{h_v}$ with $w_{h_v}(t,x) = \kappa(|t|)h_v(x)$, we get
\begin{equation}\label{eq:DGl0}
\de_u F_\l(0,0)M_1 v = \mathcal{L}_\lambda  M_1 v =\mathcal{L}_\lambda  v -  \mathcal{L}_\lambda  w_{h_v}.
\end{equation}
Next using  \eqref{eq:reldiffope-ND} 
\[L^{1+ sh}_\l (u_*^{sh})=\tilde{L}^{1+ sh}_\l (u_*^{sh})+\mu_{*}g(u_*^{sh}).\]

Differentiating this with respect to $s$  and using \eqref{eq:approximation-u-n-h},  we get for fixed $h \in
C^{2,\alpha}_{p}(\R)$
\begin{align*}
  0&=\frac{d}{d_s} \Bigl|_{s=0} \left(L_{\lambda}^{1+sh}(u_*^{sh})\right)=\left(\frac{d}{d_s} \Bigl|_{s=0} \tilde{L}_{\lambda}^{1+sh}\right)(u_*)+  \tilde{L}_{\lambda}^{1}\left(\frac{d}{d_s} \Bigl|_{s=0} u_*^{sh}\right)-\mu_{*}g'(u_*)w_h,\\
&=  \left(\frac{d}{d_s} \Bigl|_{s=0} \tilde{L}_{\lambda}^{1+sh}\right)(u_*)- \tilde{L}_{\lambda}^{1}w_h -\mu_{*}g'(u_*)w_h,\\
  &=\left(\frac{d}{d_s}\Big|_{s=0} L_{\lambda}^{1+sh}\right)(u_*)-  \mathcal{L}_\lambda w_h,  
\end{align*}
with $w_h(t,x)= \kappa(|t|)h(x)$. We also have  $F(0, sh_v)= L_\lambda^{1+ sh_v}(u_*)$ and differentiate this with respect to $s$ implies
\begin{equation}\label{eq:DGl1}
\de_h F_\l(0,0)h_v = \Bigl(\frac{d}{ds}\Bigl|_{s=0} L^{1+ sh_v}_\l\Bigr)u_* = \mathcal{L}_\l w_{h_v}.
\end{equation}
Combining \eqref{eq:DGl0}  and \eqref{eq:DGl1}   gives $DG_\l(0)v= \mathcal{L}_\lambda  v $ for $v \in X_2^D$,   and  we obtain  \eqref{eequivla-ND}.
 \QED\end{proof}

In the next section, we will analyse the operator  $ \mathcal{L}_\lambda = DG_\lambda : X_2^D \to Y$  given by  \eqref{eequivla-ND}  and  provide the  required  assumptions for applying the Crandall-Rabinowitz Bifurcation theorem \cite{M.CR}. Before, we need some intermediate results.
\begin{Lemma}
  \label{regularity-lemma-1-ND}
Let $f \in C^{0,\alpha}_{p,rad}(\ov{\O_*})$ and $u \in C^{2,\alpha}_{p,rad}(\ov{\O_*})$ satisfy
  \begin{equation}
    \label{eq:regularity-x0-lemma-ND}
  \mathcal{L}_\lambda u = f \quad \text{in $\O_*$,}\qquad u = 0 \quad \text{on $\partial \O_*$.}
  \end{equation}
If $f \in Y$, then $u \in X_2^D$.
\end{Lemma}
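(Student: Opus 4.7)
Since the hypothesis already provides $u \in C^{2,\alpha}_{p,rad}(\ov{\Omega_*})$ with $u=0$ on $\partial\Omega_*$, the only remaining point is to prove $D_t u \in C^{2,\alpha}(\ov{\Omega_*})$. My plan is to exploit the additive decomposition $Y = C^{1,\alpha}_{p,rad}(\ov{\Omega_*}) + X_0^D$: write $f = f_1 + f_2$ with $f_1 \in C^{1,\alpha}_{p,rad}$ and $f_2 \in X_0^D$, and, by linearity together with the Fredholm solvability of the Dirichlet problem for $\mathcal{L}_\lambda$, correspondingly decompose $u = u_1 + u_2$ where each $u_j$ solves $\mathcal{L}_\lambda u_j = f_j$ in $\Omega_*$ with $u_j = 0$ on $\partial\Omega_*$, and then treat the two pieces separately.

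For $u_1$ I would appeal to classical boundary Schauder regularity. Shrinking $\alpha \in (0,1)$ at the outset so that $|u_m|^{p-2} \in C^{0,\alpha}(\ov{\Omega_*})$ (possible since $|u_m|^{p-2}$ is H\"older continuous of exponent $\min(1,p-2)>0$), the $C^{1,\alpha}$-regularity of $f_1$ lets one bootstrap by differentiating the equation in the tangential variable $x$ and by using the radial ODE $(r^{N-1} U_{1,r})_r = r^{N-1}(f_1 - \lambda U_{1,xx} - \mu_m^2(p-1)|U_m|^{p-2} U_1)$ in the variable $r = |t|$. This yields $u_1 \in C^{3,\alpha}(\ov{\Omega_*})$, from which $D_t u_1 = t \cdot \nabla u_1 \in C^{2,\alpha}(\ov{\Omega_*})$ is immediate.

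For $u_2$ the plan is to set $v := D_t u_2$ and to derive an elliptic equation for it. Using the commutator identities $\Delta_t D_t = D_t \Delta_t + 2\Delta_t$ and $[\partial_{xx}, D_t] = 0$, applying $D_t$ to $\mathcal{L}_\lambda u_2 = f_2$ yields, in the distributional sense,
\begin{equation*}
\mathcal{L}_\lambda v \;=\; D_t f_2 \,+\, 2\Delta_t u_2 \,-\, \mu_m^2(p-1)\, u_2\, D_t(|u_m|^{p-2}).
\end{equation*}
The first two summands lie in $C^{0,\alpha}(\ov{\Omega_*})$: $D_t f_2 \in C^{0,\alpha}$ is precisely part of the definition of $X_0^D$, while $\Delta_t u_2 \in C^{0,\alpha}$ follows from $u_2 \in C^{2,\alpha}$.

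The main technical obstacle is the third summand, since $D_t(|u_m|^{p-2})$ is only $L^1_{loc}$ near the finitely many isolated interior spheres $\{|t| = r_j/\mu_m\}$ on which $U_m$ vanishes when $2 < p < 3$. My approach is to rewrite $u_2\, D_t(|u_m|^{p-2}) = D_t(|u_m|^{p-2} u_2) - |u_m|^{p-2} D_t u_2$ and to exploit the pointwise identity $\mu_m^2(p-1)|u_m|^{p-2} u_2 = f_2 - \Delta_t u_2 - \lambda \partial_{xx} u_2$ together with the vanishing $f_2|_{\partial\Omega_*}=0$ and the transversality of the zeros of $U_m$ ($U_m' \neq 0$ there by ODE uniqueness) in order to absorb the apparent singularity and reduce the equation to an effective form $\mathcal{L}_\lambda v = \Phi$ with $\Phi \in C^{0,\alpha}(\ov{\Omega_*})$. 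The Dirichlet data $v|_{\partial \Omega_*} = U_{2,r}(1, \cdot) \in C^{1,\alpha}(\partial \Omega_*)$ can be lifted to a $C^{2,\alpha}$ function on $\ov{\Omega_*}$ using the compatibility conditions $u_2 = 0$, $\partial_{xx} u_2 = 0$ and $\Delta_t u_2 = 0$ on $\partial\Omega_*$ inherited from the equation for $u_2$; Schauder boundary regularity then gives $v \in C^{2,\alpha}(\ov{\Omega_*})$. Combining this with $D_t u_1 \in C^{2,\alpha}$ yields $D_t u \in C^{2,\alpha}$, i.e., $u \in X_2^D$.
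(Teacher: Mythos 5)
The paper's own proof is a short absorption argument that avoids every difficulty you run into. Since $u \in C^{2,\alpha}_{p,rad}(\overline{\Omega_*})$ with $u=0$ on $\partial\Omega_*$, the zero-order term $\mathcal{M}_m(u):=\mu_m^2(p-1)|u_m|^{p-2}u$ lies in $Y$; one simply rewrites the equation as $\Delta_t u+\lambda\partial_{xx}u=f-\mathcal{M}_m(u)\in Y$ with $u=0$ on $\partial\Omega_*$ and invokes \cite[Lemma 3.7]{Fall-MinlendI-Weth4}, which is exactly this regularity statement for the constant-coefficient operator $\Delta_t+\lambda\partial_{xx}$. There is no decomposition of $f$, no splitting of $u$, and, crucially, no differentiation of the equation. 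Your plan of writing $f=f_1+f_2$ and commuting $D_t$ through the operator is essentially how one would prove the quoted Lemma 3.7 from scratch for the constant-coefficient operator; trying to run it directly on the variable-coefficient $\mathcal{L}_\lambda$ is where the trouble starts.

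Two steps in your proposal do not go through. First, the splitting $u=u_1+u_2$ with $\mathcal{L}_\lambda u_j=f_j$, $u_j|_{\partial\Omega_*}=0$ need not exist: $\mathcal{L}_\lambda:X_2^D\to Y$ is Fredholm of index zero but, at the values of $\lambda$ that matter in this paper, is neither injective nor surjective, so the Dirichlet problem can fail for $f_1$ and $f_2$ separately even though $u$ solves it for their sum. Second, your treatment of the singular term $u_2\,D_t(|u_m|^{p-2})$ is circular. Near the interior spheres $\{|t|=r_j/\mu_m\}$ where $U_m$ vanishes transversally, $D_t(|u_m|^{p-2})$ behaves like $|r-r_j/\mu_m|^{p-3}$, which is unbounded for $2<p<3$, and $u_2$ has no reason to vanish there. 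Substituting $\mu_m^2(p-1)|u_m|^{p-2}u_2=f_2-\Delta_t u_2-\lambda\partial_{xx}u_2$ and then applying $D_t$ merely replaces the singular term by $D_t\Delta_t u_2$ and $D_t\partial_{xx}u_2$, that is, third-order derivatives of $u_2$, precisely the quantities you are trying to control. The same interior singularity also undercuts the bootstrap to $u_1\in C^{3,\alpha}$: improving the radial regularity of $U_{1,r}$ via the ODE requires $|U_m|^{p-2}U_1$ to be $C^{1,\alpha}$ in $r$, and $\partial_r(|U_m|^{p-2})$ blows up at the same interior spheres. The obstruction lives in the interior of $\Omega_*$, not at $\partial\Omega_*$, so boundary Schauder regularity is not the relevant tool; the paper's route of absorbing $\mathcal{M}_m(u)$ into the right-hand side sidesteps this entirely.
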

\begin{proof}
By assumption (A)-(iii), we    have that the  mapping   $\mathcal{M}_*: Y \to Y$, $v \mapsto \mu_{*}g'(u_*)v$  is well defined and is a bounded linear operator. Let now  $f \in C^{0,\alpha}_{p,rad}(\ov{\O_*})$ and $u \in C^{2,\alpha}_{p,rad}(\ov{\O_*})$ such that  \eqref{eq:regularity-x0-lemma-ND} holds. Then
$$
  \Delta_t u+\lambda\partial_{xx} u = f-\mathcal{M}_*(u)\in Y \quad \text{in $\O_*$,}\qquad u = 0 \quad \text{on $\partial \O_*$.}
$$
Consequently, applying  \cite[Lemma 3.7]{Fall-MinlendI-Weth4} we deduce that $u \in X_2^D$.\QED
\end{proof}
\begin{Proposition}
\label{fredholm}
For every $\lambda>0$, the operator $\mathcal{L}_\lambda = DG_\lambda : X_2^D \to Y$ is a Fredholm operator of index zero.
\end{Proposition}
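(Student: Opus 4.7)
The plan is to exhibit $\mathcal{L}_\lambda$ as a compact perturbation of an operator already known to be Fredholm of index zero. Write $\mathcal{L}_\lambda = \mathcal{L}^0_\lambda + K$, where
$$
\mathcal{L}^0_\lambda v := \Delta_t v + \lambda \partial_{xx} v, \qquad K v := \mu_m^{2}(p-1)|u_m|^{p-2} v,
$$
both considered as bounded linear maps $X_2^D \to Y$.

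First, I would argue that $\mathcal{L}^0_\lambda : X_2^D \to Y$ is Fredholm of index zero. This is essentially the linear statement already proved in \cite{Fall-MinlendI-Weth4}; the transfer to the present $(X_2^D, Y)$ setting is ensured by Lemma \ref{regularity-lemma-1-ND}, which shows that every $C^{2,\alpha}_{p,rad}$ solution of $\mathcal{L}^0_\lambda u = f$ with $u=0$ on $\partial \Omega_*$ already lies in $X_2^D$ as soon as $f \in Y$. Combined with a Fourier decomposition in the $x$-variable, exploiting $2\pi$-periodicity and evenness, the problem decouples into countably many radial ODE problems on $B_1$ of the form $\Delta_t v_k - \lambda k^2 v_k = f_k$, $v_k = 0$ on $\partial B_1$. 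All but finitely many such modes are uniformly invertible (giving summable solutions in $Y$), while the remaining finite collection contributes a finite-dimensional kernel and cokernel. The index equals zero by a standard homotopy argument (for instance deforming $\lambda \rightsquigarrow 1$ to reduce to the self-adjoint situation on the periodic cylinder).

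Next, I would verify that $K : X_2^D \to Y$ is compact. Since $u_m(t,x)=U_m(|t|) \in C^\infty(\overline{\Omega_*})$ is bounded (by the properties of the ODE solution listed below \eqref{eq:ODE-eigenvalue}), the coefficient $\mu_m^{2}(p-1)|u_m|^{p-2}$ lies in $C^\infty_{p,rad}(\overline{\Omega_*})$. Hence multiplication by this coefficient sends $X_2^D$ continuously into $C^{2,\alpha}_{p,rad}(\overline{\Omega_*})$. Periodicity in $x$ reduces the underlying geometry to the compact manifold with boundary $\overline{B_1}\times(\R/2\pi\Z)$, on which the inclusion $C^{2,\alpha}_{p,rad}(\overline{\Omega_*}) \hookrightarrow C^{1,\alpha}_{p,rad}(\overline{\Omega_*})$ is compact by the Arzelà--Ascoli theorem. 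Since $C^{1,\alpha}_{p,rad}(\overline{\Omega_*}) \hookrightarrow Y$ continuously (by the very definition of $Y$ as a sum space), the composition $K$ is compact from $X_2^D$ to $Y$.

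Putting the two ingredients together and invoking the stability of the Fredholm index under compact perturbation yields that $\mathcal{L}_\lambda = \mathcal{L}^0_\lambda + K$ is Fredholm of index zero. The delicate point, which I expect to absorb most of the work, is the Fredholm property of the base operator $\mathcal{L}^0_\lambda$ between the tailor-made spaces $X_2^D$ and $Y$: the target $Y$ is built as a sum space precisely to accommodate the reduced $x$-regularity coming from the composition with $M$, so the statement cannot be read off directly from Schauder theory and really requires the regularity upgrade of Lemma \ref{regularity-lemma-1-ND} together with the mode-by-mode analysis to import the Fredholm conclusion from the standard Hölder setting of \cite{Fall-MinlendI-Weth4} to the functional framework used here.
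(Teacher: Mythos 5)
Your decomposition is exactly the one the paper uses, namely $\mathcal{L}_\lambda = \widetilde{L} + K$ with $\widetilde{L}v = \lambda v_{xx}+\Delta_t v$ and $K$ the multiplication term, so the overall strategy is the same. Two remarks on the execution, one of which is a genuine gap.

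The gap is in the compactness step. You assert that the coefficient $\mu_m^2(p-1)|u_m|^{p-2}$ lies in $C^\infty_{p,rad}(\overline{\Omega_*})$, and use this to conclude that multiplication maps $X_2^D$ continuously into $C^{2,\alpha}_{p,rad}(\overline{\Omega_*})$. This is false in general: the radial profile $U_m$ has interior zeros in $(0,1)$ (by construction $U_m(r)=u(\mu_m r)$ and $\mu_m>r_m$, so $U_m$ vanishes at $r_j/\mu_m$ for $1\le j\le m$), and near a simple zero $r_0$ of $U_m$ one has $|U_m(r)|^{p-2}\sim |r-r_0|^{p-2}$. For $p$ in the allowed range $2<p<2^*$ this profile is generically not even $C^1$ (e.g.\ for $2<p<3$), let alone $C^{2,\alpha}$, so multiplication does not land in $C^{2,\alpha}$ and your chain $X_2^D \to C^{2,\alpha}\hookrightarrow C^{1,\alpha}\hookrightarrow Y$ breaks down. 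The paper avoids this by routing the compactness the other way around: it records that the embedding $i:X_2^D\hookrightarrow Y$ is itself compact (a nontrivial fact borrowed from \cite{Fall-MinlendI-Weth4}), and then only needs the multiplication operator $\mathcal{M}_m:Y\to Y$, $v\mapsto \mu_m^2(p-1)|u_m|^{p-2}v$, to be \emph{bounded} on $Y$, so that $K=\mathcal{M}_m\circ i$ is compact as the composition of a compact map with a bounded one. This places a much milder regularity demand on the coefficient than your argument does. You should replace your chain by this one, and in any case drop the claim that $|u_m|^{p-2}$ is smooth.

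The second point is an inefficiency rather than an error. For the base operator $\widetilde{L}:X_2^D\to Y$ the paper does not need a mode-by-mode Fredholm computation and a homotopy in $\lambda$: it simply cites \cite[(3.9)]{Fall-MinlendI-Weth4}, which says that $\widetilde{L}$ is a \emph{topological isomorphism} between $X_2^D$ and $Y$, hence trivially Fredholm of index zero. Your Fourier decomposition could be pushed through, but it misses the cleaner observation that in fact $\widetilde{L}$ is invertible: after separating variables, each mode $k\ge 0$ reduces to the operator $\Delta_t-\lambda k^2$ with Dirichlet data on $B_1$, whose spectrum is strictly negative, so every mode is uniquely solvable and no index-deformation argument is required. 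Also note that Lemma \ref{regularity-lemma-1-ND} in the paper is stated for the full operator $\mathcal{L}_\lambda$, not for $\widetilde{L}$; the constant-coefficient regularity upgrade you actually need is the one cited inside its proof (\cite[Lemma 3.7]{Fall-MinlendI-Weth4}).
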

\begin{proof}
We observe that  $\mathcal{M}_*: Y \to Y$, $v \mapsto \mu_{*}g'(u_*)v$  defines  a bounded linear operator.   Since  furthermore the embedding $i: X_2^D \hookrightarrow Y$ is compact,   defining  $\widetilde{L}: X_2^D \to Y$, with  $\widetilde{L} v= \lambda v_{xx} + \D_t v$, we have that $$ \mathcal{L}_\lambda-\widetilde{L}=\mathcal{M}_*\circ i: X_2^D \rightarrow Y$$ is compact.  On the other hand, the operator $\widetilde{L}: X_2^D \to Y$, $\widetilde{L} v= \lambda v_{xx} + \D_t v$  is a topological isomorphism, (see  \cite[(3.29)]{Fall-MinlendI-Weth4})  and since the  Fredholm property and the Fredholm index are stable under compact perturbations, the proof is complete.
 \QED\end{proof}

\section{Study of the linearised operator $DG_\l(0)$ }\label{section 4}

In this section, we further analyse the operator  $ \mathcal{L}_\lambda = DG_\lambda : X_2^D \to Y$ in Proposition \ref{sec:functional-setting-4} and study  its spectral properties. \\

In the following we define for  $k \ge 0$, the spaces
$$
H^{k}_{0,rad}(B_1):= \{ u \in H^{k}_{0}(B_1)\::\: \text{$u$ is a radial function } \},
$$
 and we define $$ \mathcal{L}_{*,D}:  C^{2,\a}_{0, rad}(B_1)  \to C^{0,\a}_{rad}(B_1), \quad \quad \mathcal{L}_{*,D}(u):=-\Delta_{t}u-\mu_{*}g'(U_*)u$$ and denote  $V_{j,*} \in   C^{2,\a}_{0, rad}(B_1)$ the (radial)  eigenfunctions  of $\mathcal{L}_{*,D}$ with  the  corresponding  eigenvalues  $\gamma_{j, *}$ so that
\begin{equation} \label{z}
\begin{cases}
 V_{j,*}^{''}+\frac{N-1}{r}V_{j,*}^{'}+ \mu_{*}g'(U_*)V_{j,*}+\gamma_{j,*}V_{j,*}=0 &\mbox{in $(0, 1)$\,, }\\
V'_{j,*}(0)=0,   \quad  V_{j,*}(1)=0.
\end{cases}
\end{equation}
The quadratic form associated to  the operator $\mathcal{L}_{*,D}:  C^{2,\a}_{0, rad}(B_1)  \to C^{0,\a}_{rad}(B_1)$ is given by
$$ \mathcal{Q}_{*,D}:   H^{1}_{0,rad}(B_1) \to \R, \quad \quad   \mathcal{Q}_{*,D}(u)=\int_{B_1}\big(|\nabla  u|^{2}-\mu_{*}g'(U_*) u^{2}\big).$$
Moreover,
$$
\gamma_{1, *}=\inf\left\{  \frac{\mathcal{Q}_{*,D}(u)}{ \|u\|^2_{L^{2}(B_1)} }:   \quad  u\in H^{1}_{0,rad}(B_1)  \right\}.
$$

Since $\mathcal{Q}_{*,D}$ is considered among radially symmetric functions, we can write
 \begin{align}
    \label{gamma12}
\gamma_{1, *} &= \inf_{u\in \cH(0,1)} \frac{ \int_{0}^{1} r^{N-1} [u'(r)^2 - \mu_{*}g'(U_*)u^2(r)]dr }{   \int_{0}^{1}  r^{N-1} u^2(r)dr  },
\end{align}
with $\cH(0,1):= \{u \in H^1(0,1)\::\: \text{$u(1)=0$ in trace sense}\}$ and its norm is given by:

\begin{equation*} \label{radialnorm} \| u \|_{\cH(0,1)} = \left ( \int_{0}^{1} r^{n-1} \left[(\partial_{r} u)^{2}+ u^{2}\right] dr \right )^{1/2}. \end{equation*}

In the sequel, we will often use $\mathcal{Q}_D$ for $\mathcal{Q}_{*,D}$ if there is no confusion. With this, we have

\begin{Lemma} \label{legamma1}
There holds: $\gamma_{1, *} < 0$.
\end{Lemma}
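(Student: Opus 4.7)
The plan is to use the variational characterization \eqref{gamma12} and to exhibit an explicit trial function $v \in H^1_{0,rad}(B_1)$ for which $\mathcal{Q}_{m,D}(v)<0$. The most natural candidate is $U_m$ itself, because testing the ODE $(r^{N-1}U_m')' = -\mu_m^2 r^{N-1}|U_m|^{p-2}U_m$ against $U_m$ on $B_1$ formally gives
\[
\mathcal{Q}_{m,D}(U_m) = -(p-2)\,\mu_m^2 \int_{B_1} |U_m|^p \, dt < 0,
\]
with no boundary contribution since $U_m'(1)=0$. The obstruction is that $U_m(1)=c_m \ne 0$, so $U_m \notin H^1_{0,rad}(B_1)$.

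To repair this, I would truncate $U_m$ at its first interior zero. By Remark \ref{eq:RKmain-ND}, $U_m(r)=u(\mu_m r)$, and since the first zero $r_1$ of $u$ satisfies $r_1 \leq r_m < \mu_m$, the value $R := r_1/\mu_m$ lies in $(0,1)$ and is the smallest positive zero of $U_m$. I would then set
\[
v(t):=\begin{cases} U_m(|t|), & |t| \leq R,\\ 0, & R \leq |t| \leq 1. \end{cases}
\]
Then $v$ is Lipschitz and radial, vanishes on $\partial B_1$, and satisfies $v(0)=U_m(0)=1$, so $v \in H^1_{0,rad}(B_1) \setminus \{0\}$ is admissible in \eqref{gamma12}.

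Next, I would compute $\mathcal{Q}_{m,D}(v)$ directly. Since $v \equiv 0$ outside $B_R$ and $v=U_m$ on $B_R$ with $v=0$ on $\partial B_R$, integration by parts combined with $-\Delta_t U_m = \mu_m^2 |U_m|^{p-2}U_m$ yields
\[
\int_{B_R} |\nabla v|^2 \, dt = \mu_m^2 \int_{B_R} |U_m|^p \, dt,
\]
since the boundary integral vanishes (as $v\equiv 0$ on $\partial B_R$). Substituting this into $\mathcal{Q}_{m,D}(v)$ gives
\[
\mathcal{Q}_{m,D}(v) = -\mu_m^2(p-2)\int_{B_R} |U_m|^p \, dt,
\]
which is strictly negative because $p>2$ and $U_m$ does not vanish identically on $B_R$. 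Dividing by $\|v\|_{L^2(B_1)}^2 > 0$ then yields $\gamma_{1,m} < 0$.

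The argument is essentially a short one-step computation; no substantial obstacle is expected. The only mild point worth verifying is that the first zero of $U_m$ lies strictly inside the unit interval, but this is immediate from the location of $\mu_m$ strictly between consecutive zeros of $u$.
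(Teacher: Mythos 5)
Your proof is correct, but it takes a genuinely different route from the paper. You test the quadratic form with the truncation of $U_m$ at its first interior nodal radius $R = r_1/\mu_m \in (0,1)$; the ODE $-\Delta_t U_m = \mu_m^2|U_m|^{p-2}U_m$ and the vanishing of $U_m$ on $\partial B_R$ give $\mathcal{Q}_{m,D}(v) = -\mu_m^2(p-2)\int_{B_R}|U_m|^p\,dt < 0$, using only $p>2$. The paper instead tests with $U_m'$ itself: differentiating the ODE once, multiplying by $r^{N-1}U_m'$ and integrating gives
\[
\mathcal{Q}_{m,D}(U_m') = -(N-1)\,\omega_N\int_0^1 r^{N-3}U_m'(r)^2\,dr,
\]
which is strictly negative because $N\geq 2$. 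Both are short one-step computations, and both hypotheses ($p>2$, $N\geq 2$) hold throughout the paper, so the two routes are interchangeable here; your truncation argument has the mild advantage of working independently of the dimension (relying on $p>2$ alone), whereas the paper's argument exploits the shift-invariance / differentiated ODE and depends on $N\geq 2$. Your observation that $R<1$ because $r_1 \leq r_m < \mu_m$ is exactly the point that makes the truncated trial function admissible, and your integration by parts on $B_R$ (with $U_m = 0$ on $\partial B_R$, so no boundary term) is correct.
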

\begin{proof}
It suffices to find $\psi \in  H^{1}_{0,rad}(B_1)$ such that $\mathcal{Q}_D<0$. Since $\mathcal{Q}_D$ is considered among radially symmetric functions, we can write the quadratic form as
\begin{align*}
    \mathcal{Q}_D(\psi)&=\int_{B_1}\big[|\nabla\psi|^{2}-\mu_{*}g'(U_*)\psi^{2}\big]\\
    &=\omega_{N}\int_0^1r^{N-1}\big[\psi'(r)^{2}-\mu_{*}g'(U_*)\psi(r)^{2}\big]dr.
\end{align*}
Recall that the function $U_*$ solves  \eqref{eq:ODE-eigenvalue}. Hence  differentiating the equation
       \[- U_*''-\frac{N-1}{r}U_*'-\mu_{*}g(U_*)=0, \]
we obtain
  \begin{equation}\label{eq368}
  -U_*'''(r)-\frac{N-1}{r}U_*''(r)+\frac{N-1}{r^{2}}U_*'(r)-\mu_{*}g'(U_*)U_*'(r)=0.
\end{equation}
We also have
\begin{align*}
\int_0^1r^{N-1}U_*'''(r)U_*'(r)dr&=\int_0^1r^{N-1}U_*'(r)dU_*''(r)=-\int_0^1U_*''(r)d\big(r^{N-1}U_*'(r)\big)\\
&=-\int_0^1r^{N-1}U_*''(r)^{2}dr-(N-1)\int_0^1r^{N-2}U_*'(r)U_*''(r)dr.
\end{align*}
Therefore  multiplying  (\ref{eq368}) by $r^{N-1}U_*'(r)$ and integrating, we obtain
\begin{equation*}
\int_0^1r^{N-1} \left[ U_*''(r)^{2}-\mu_{*}g'(U_*)U_*'(r)^{2}\right]dr=-(N-1)\int_0^1r^{N-3}U_*'(r)^{2}dr.
\end{equation*}
We can take the test function $U_*'(r)\in  H^{1}_{0,rad}(B_1) $ obtaining:
  \begin{align*}
  \mathcal{Q}_D(U_*'(r)) &=\omega_{N}\int_0^1r^{N-1}\big[U_*''(r)^{2}-\mu_{*}g'(U_*)U_*'(r)^{2}\big]dr\\
     &=-(N-1)\omega_{N}\int_0^1r^{N-3}U_*'(r)^{2}dr.
\end{align*}
Recalling $N\geq 2$, the proof is complete.
\QED
\end{proof}

From Lemma \ref{legamma1},  there exists a number $\ell_{0} \in \N$ such that
\begin{equation*}
\gamma_{1,*}<\gamma_{2, *}<\cdots<\gamma_{ \ell_{0}, *}<0<\gamma_{ \ell_{0}+1, *}<\cdots.
\end{equation*}

In the  following, we denote by $V_*:=V_{1,*}$, the eigenfunction corresponding to the first eigenvalue  $\gamma_{1, *}$ defined  by \eqref{gamma12} and consider the function
\begin{equation*}
v_*(t, x):=V_*(t)\cos(x).
\end{equation*}
We also set
\begin{align*}
\lambda_* :=-\gamma_{1,*}.
\end{align*}
\begin{Proposition}\label{propCR-ND-Dir}
We have the following properties.
\begin{itemize}
\item[(i)] The kernel $N(\cL _*)$ of the operator  $\cL _*: =D G_{\lambda_*}(0)$ in Proposition  \ref{sec:functional-setting-4}  is spanned  by $v_*(t, x)= V_*(|t|)\cos(x)$, where  $V_*$ is  minimizer of  \eqref{gamma12}.
\item[(ii)] The range of $\cL_* $ is given by
$$
R(\cL_* )=   \left \lbrace  w\in Y: \int_{\O_* } v_{*}(t,x)w(t,x)\,dxdt=0\right\rbrace.
$$
\item[(iii)]  Moreover,
\begin{equation*}
\partial_\lambda \Bigl|_{\lambda=\l_* }\cL _*(v_*)\not  \in \; R(\cL _*).\\
\end{equation*}
\end{itemize}
\end{Proposition}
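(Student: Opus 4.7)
My plan is to prove the three parts in order, all via Fourier cosine decomposition in the periodic variable $x$, which decouples the equation $\mathcal{L}_{\lambda_m}v=0$ into a sequence of radial Dirichlet spectral problems of the form \eqref{z}. Throughout, integrals ``over $\Omega_*$'' are interpreted as integrals over one period $B_1\times[-\pi,\pi]$, since all functions in the Banach spaces introduced in Section~\ref{sec:functional-setting} are $2\pi$-periodic in $x$.

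For part (i), any $v\in X_2^D$ with $\mathcal{L}_{\lambda_m}v=0$ expands as
\[
v(t,x)=\sum_{k\ge 0} v_k(|t|)\cos(kx),\qquad v_k\in C^{2,\alpha}_{rad}(\overline{B_1}),\quad v_k(1)=0,
\]
because $v$ is radial in $t$ and even and $2\pi$-periodic in $x$. Inserting this into $\mathcal{L}_{\lambda_m}v=0$ and separating modes yields, for each $k$, the Dirichlet problem $\mathcal{L}_{m,D}v_k=-\lambda_m k^2 v_k$ on $B_1$. For $k=0$ this would force $0$ to be an eigenvalue of $\mathcal{L}_{m,D}$, which is excluded by the sign ordering $\gamma_{\ell_0,m}<0<\gamma_{\ell_0+1,m}$ established after Lemma~\ref{legamma1}. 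For $k=1$, the definition $\lambda_m=-\gamma_{1,m}$ places $-\lambda_m=\gamma_{1,m}$ precisely at the (one-dimensional) first eigenvalue, so $v_1\in\mathrm{span}\{V_m\}$. For $k\ge 2$ one would need $\gamma_{j,m}=k^2\gamma_{1,m}<\gamma_{1,m}$, contradicting the minimality of $\gamma_{1,m}$. Hence $N(\mathcal{L}_m)=\mathrm{span}\{v_m\}$.

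For part (ii), Proposition~\ref{fredholm} tells us that $\mathcal{L}_m$ is Fredholm of index zero, and combined with (i) this gives $\mathrm{codim}\,R(\mathcal{L}_m)=1$. To identify the range explicitly, I would integrate by parts to obtain
\[
\int_{\Omega_*} v_m\,\mathcal{L}_m u\, dxdt \;=\; \int_{\Omega_*} u\, \mathcal{L}_m v_m\, dxdt \;=\; 0 \qquad\text{for all } u\in X_2^D,
\]
which is legitimate because both $u$ and $v_m$ vanish on $\partial B_1\times\R$ (killing the $\Delta_t$ boundary terms) and all functions are $2\pi$-periodic in $x$ (killing the $\partial_{xx}$ boundary terms), while the zero-order multiplication by $\mu_m^2(p-1)|u_m|^{p-2}$ is manifestly symmetric. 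Thus $R(\mathcal{L}_m)\subset H:=\{w\in Y:\int_{\Omega_*}v_m w\,dxdt=0\}$. Since $v_m\in X_0^D\subset Y$ and $\int_{\Omega_*}v_m^2>0$, the linear functional $w\mapsto\int v_m w$ is nontrivial on $Y$, so $H$ has codimension exactly one; comparing codimensions forces $R(\mathcal{L}_m)=H$.

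For part (iii), I use $D_uG_\lambda(0)=\mathcal{L}_\lambda=\Delta_t+\lambda\partial_{xx}+\mu_m^2(p-1)|u_m|^{p-2}\,\mathrm{id}$ from Proposition~\ref{sec:functional-setting-4}, which gives
\[
\partial_\lambda\bigl|_{\lambda=\lambda_m}D_uG_\lambda(0)\,v_m \;=\; \partial_{xx}v_m \;=\; \partial_{xx}\bigl(V_m(|t|)\cos x\bigr)\;=\;-v_m.
\]
By the characterization in (ii),
\[
\int_{\Omega_*} v_m\cdot(-v_m)\, dxdt \;=\; -\pi\int_{B_1}V_m(|t|)^2\, dt\;<\;0,
\]
so $\partial_\lambda\bigl|_{\lambda=\lambda_m}D_uG_\lambda(0)(v_m)\notin R(\mathcal{L}_m)$, as required. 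The main subtlety I anticipate is the integration-by-parts step in (ii): since the ``source'' space $Y$ is only $C^{0,\alpha}_{p,rad}+X_0^D$, one has to justify transferring both $\Delta_t$ and $\partial_{xx}$ onto $v_m$; however the identity is applied to elements of the form $\mathcal{L}_m u$ with $u\in X_2^D$, and the vanishing of $u$ and $v_m$ on $\partial B_1$ together with $2\pi$-periodicity in $x$ makes the computation classical.
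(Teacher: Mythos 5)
Your proposal is correct and follows essentially the same route as the paper: Fourier cosine decomposition in $x$ reducing $\mathcal{L}_{\lambda_m}v=0$ to the radial Dirichlet spectral problem \eqref{z} for part (i), self-adjointness of $\mathcal{L}_m$ together with the Fredholm index-zero result of Proposition~\ref{fredholm} for part (ii), and the direct computation $\partial_\lambda\bigl|_{\lambda_m}DG_\lambda(0)v_m=-v_m$ for part (iii). The only cosmetic difference is that the paper phrases the symmetry step in (ii) through the bilinear form $\mathcal{B}$ rather than through a direct integration by parts, but the content is identical.
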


\begin{proof}
(i) Let $v\in X_2^D $  such that $ \mathcal{L}_{*}v=0$ in $ \O_*$.  We  expand $v$ as a uniformly convergent Fourier series in the $x$-variable of the form $v(t,x)=\sum \limits_{\ell=0}^\infty v_\ell(|t|)\cos(\ell x)$. Then for every $\ell \ge 0$, the coefficient function $v_\ell(t):= \frac{1}{\sqrt{2\pi}}\int_{0}^{2\pi} v(t,x)\cos (\ell x)\,dx$ is an eigenfunction of the eigenvalue problem
\begin{equation*} 
\quad
\left\{\begin{aligned}
&v^{''}+\frac{N-1}{r}v^{'}+ \mu_*g'(U_*)v  = \lambda_* \ell ^2 v\quad \text{in $(0,1)$,}\\
&v'(0)=v(1)=0.
\end{aligned}
\right.
\end{equation*}
Therefore we obtain from  \eqref{z},
\begin{equation} \label{bifpara1}
\lambda_* \ell^2 =-\gamma_{j,*},  \quad \textrm{for some $j \in \{1, \cdots, \ell_{0}\}$}.
\end{equation}
Obviously  \eqref{bifpara1} does not hold for $\ell=0$.  In addition if \eqref{bifpara1} is valid for some $j> 1$, then $ \ell^2 =\frac{\gamma_{j,*}}{\gamma_{1,*}}<1$   after  recalling that $\gamma_{1,*}$ and  $\gamma_{j,*}$ are both  negative. Hence $\ell =0$ which is impossible.  The equality \eqref{bifpara1} therefore only holds for $j=1$  and we obtain  $\ell =1$.

To prove (ii), we let   $ w \in R(\mathcal{L}_{*})  \subseteq  Y$. Then there exists $u \in  X^D_2$ such that
$$
\mathcal{L}_{*}u= w,
$$
that is,
\begin{equation}\label{eq:diffeq33}
\left\{\begin{aligned}
\D_{\t}u + \lambda_* \partial_{xx}u +  \mu_{*}g'(U_*) u&=w \quad\text{in}\quad \O_*,\\
u &= 0\quad\text{on}\quad \partial\O_*.
 \end{aligned}
\right.
\end{equation}
We define $\cB: H_0^1(\O_*) \times H^1_0(\O_*)\to \R,$
$$
\cB(u,v) =\int_{\O_*} [\n_t u \cdot \n_t v+\l_*\de_x   u \de_x v]-  \mu_{*}\int_{\O_*}g'(U_*) u v.
$$
Multiply  \eqref{eq:diffeq33} by $\vp\in C^1(\ov\O_*) $ and integrate by parts to have
$$
\cB(u,\vp)=\int_{\O_*}w\vp.
$$
It is clear that  $\cB(u,v_*)=0$ and  we  deduce   $\int_{\O_*}w v_* =0 $, so that
\be\label{eq.range}
R(\mathcal{L}_{*}) \subseteq  \left \lbrace  w\in Y: \int_{\O_* } v_{*}(t,x)w(t,x)\,dxdt=0\right\rbrace.
\ee
Finally, we note that  $R(\mathcal{L}_{*})$ has codimension one by (i) and since $\mathcal{L}_{*}$ is Fredholm of index zero by Proposition~\ref{fredholm}. This together  with  \eqref{eq.range}  gives (ii).

Finally, it is clear from \eqref{eequivla-ND} that
$$
\de_\l\big|_{\l=\l_{*}} DG_{\l }(0) v_* = \de_{xx}v_*=-v_*.
$$
The  proof is complete.
 \QED
\end{proof}

\section{Proof of Theorem \ref{Theo1-ND}  }\label{eq:ProofTheo1-ND}
The proof of Theorem  \ref{Theo1-ND} is achieved  by applying the Crandall-Rabinowitz Bifurcation theorem to solve the equation
\begin{align*}
 G_\lambda(u)=0,
\end{align*}
where   $G_{\l}: \cU   \rightarrow Y$  is   defined by
\eqref{eq:DeffGl-ND}. \\

We consider the smooth  map  $G_{\l_*}: \cU  \rightarrow   Y$ and  set
\begin{align*}
\mathcal{X}_{*} := \left\lbrace  v \in  X^D_{2}: \int_{\O_* } v(t,x)v_{*}(t,x)\,dxdt =0\right\rbrace.
\end{align*}
By Proposition \ref{propCR-ND-Dir}  and  the Crandall-Rabinowitz Theorem (see \cite[Theorem 1.7]{M.CR}), we then find ${\e_*}>0$ and a smooth curve
$$
(-{\e_* },{\e_* }) \to  (0,\infty) \times \mathcal{U} \subset  \R_+ \times  X^D_{2}, \qquad s \mapsto (\lambda_*(s), \varphi^*_s)
$$
such that
\begin{enumerate}
\item[(i)] $G_{\lambda_*(s)} (\varphi^*_s)=0$ for $s \in (-{\e_* },{\e_* })$,
\item[(ii)] $ \lambda_* (0)= \l_{*}$, and
\item[(iii)]  $\varphi^*_s = s v_{*}+ s \o_*(s) $ for $s \in (-{\e_* },{\e_* })$ with a smooth curve
$$
(-{\e_* },{\e_* }) \to \mathcal{X}_* ^{\perp}, \qquad s \mapsto \o_*(s)
$$
satisfying $ \o_*(0) =0$
and
$$\int_{\O_* } \o_*(s) (t,x) v_{*}(t,x)\,dxdt=0.$$
\end{enumerate}

\subsection*{Proof of Theorem \ref{Theo1-ND} (completed)}
\label{sec:proof-theor-refth}

Recalling  \eqref{eq:Eqtosolve-ND}, we see that, since  $G_{\lambda_*(s)} (\vp^*_{s})=0$ for every  $s \in (-{\e_*},{\e}_*)$, the function
\begin{align}
 \ti u_s ( \t,x)&:= u_*(\t,x) + [M_1   \varphi^*_{s}]({\t},x) \nonumber\\
  &= u_*(\t,x) + \varphi^*_{s}({\t},x)-\kappa(|t|)h_{\vp^*_{s}}(x)  \label{eq:Solutionfinal}
\end{align}
solves  (\ref{eq:equivalence-F-ND}) with
\begin{equation}
  \label{eq:def-h-v-phi-s}
h_{\vp^*_{s}}(x) = \frac{1}{U''_*(1)}{D}_t \vp^*_{s}(e_1,x)
\end{equation}
and $\kappa(r)=r U_*'(r)$ as defined in (\ref{eq:DeffM-ND--2}) and (\ref{eq:def-g_n}). Hence  by   \eqref{eqreladiffopets-ND-2} and  \eqref{eq:changsoluD}, the function
$$
(t,x )\mapsto w_s(t,x)=\ti u_s(h^*_{s}(x)t,x)
$$
solves  \eqref{h-Neu-over} with $\mu=\frac{\mu_*}{\l_*(s)} $ and
\begin{equation}\label{eq:expphhhm}
 h^*_{s}(x)=\frac{1+h_{\varphi^*_{s}}(x)}{\sqrt{\l_*(s)}} \qquad \text{for $x \in \R$.}
\end{equation}
Moreover, by (iii),
\begin{equation}
  \label{eq:varphi-s-exp}
\varphi_{s}(t,x)= s v_*(t,x)+ o(s)= s V_*(|t|)\cos( x)+ o(s),
\end{equation}
where $o(s) \to 0$ in $C^2$-sense in $\ov{\O_*}$ as $s \to 0$. Hence using \eqref{eq:def-dt-u},
\begin{equation*}
  \begin{split}
    D_t \varphi_{s}(e_1,x)&= s V'_*(1)\cos( x)+ o(s),
  \end{split}
\end{equation*}
where $o(s) \to 0$ in $C^1$-sense in $\ov{\O_*}$ as $s \to 0$. Using this, we find  with  \eqref{eq:def-h-v-phi-s} that
\begin{equation}
\begin{split}
  h_{\varphi^*_{s}} (x)&= s \frac{V'_*(1) }{ U''_*(1)}\cos( x)+ o(s) \qquad \text{as $s \to 0$}
\end{split}
  \label{eq:varphi-s-der-exp}
   \end{equation}
and therefore,  using  \eqref{eq:expphhhm} it follows that
\begin{align*}
  h^*_{s}(x)&=\frac{1+h^*_{\varphi_{s}}}{\sqrt{\l_*(s)}}= \frac{1}{\sqrt{\l_*(s)}} + s  \frac{V'_*(1) }{ U''_*(1) \sqrt{\l_*(s)}}\cos( x)+ o(s)
\end{align*}
as $s \to 0$.

Finally, by (\ref{eq:Solutionfinal}), (\ref{eq:varphi-s-exp}) and (\ref{eq:varphi-s-der-exp}),
\begin{align}
&w_s\left(\frac{t}{h^*(s)},x\right)= \ti u_s ( \t,x) = u_*(\t,x) + \varphi^*_{s}({\t},x) - \kappa(|t|)h_{\vp^*_{s}}(x) \nonumber\\
  &=U_*(|t|) + s \Bigl( V_{*}(|t|)- \frac{V'_*(1)}{U''_*(1)}|t|U_*'(|t|) \Bigr)\cos (x) + o(s),\nonumber
\end{align}
where $o(s) \to 0$ in $C^1$-sense on $\O_*$.
Then one deduces the desired constant in  Theorem~\ref{Theo1-ND}, which completes  the  proof.

\QED

\end{document}